\newlength{\defbaselineskip}
\newcommand{\setlinespacing}[1]%
           {\setlength{\baselineskip}{#1 \defbaselineskip}}
\theoremstyle{plain}
\newtheorem{thm}{Theorem}[section]
\newtheorem{cor}[thm]{Corollary}
\newtheorem{lem}[thm]{Lemma}
\newtheorem{prop}[thm]{Proposition}
\newtheorem{exam}[thm]{Example}
\newtheorem{rem}[thm]{Remark}
\newcommand{\C}{\mathbb{C}}
\newcommand{\om}{\overline{\Omega}}
\makeatletter\@addtoreset{equation}{section} \makeatother
\begin{document}
\title {Multiplication operators on the Bergman space of bounded domains} %local inverse and proper map
 \author{Hansong Huang }
 \address{ School of Mathematics, East China
University of Science and Technology, Shanghai, 200237, China}
\email{hshuang@ecust.edu.cn}
 \author{Dechao Zheng }
 \address{Department of  Mathematics,  Vanderbilt  University,   Nashville, TN, 37240, United States }
 \email{dechao.zheng@vanderbilt.edu}
\date{}
\keywords{ multiplication operators;
 local inverse; $L_a^2$-removable;  von Neumann algebra; holomorphic  proper map}
 \thanks{\emph{2010 AMS Subject Classification:}  47A13; 47B35; 47B91; 32H35.}
%  47C15; 46C99.
 \maketitle \noindent\textbf{Abstract:}
  In this paper we study multiplication operators   on Bergman spaces of
 high dimensional bounded domains and those
 von Neumann algebras induced by them via the geometry of domains and function theory of their symbols.
 In particular, using local inverses and $L^2_a$-removability, we show that  for   a holomorphic proper map $\Phi =(\phi_1, \phi_2, \cdots , \phi_d)$ on a bounded domain $\Omega$ in  $\mathbb{C}^{d}$, the dimension   of the von Neumann algebra $\mathcal{V}^*(\Phi ,\Omega) $ consisting of
 bounded operators on the Bergman space $L_a^2(\Omega)$, which commute with both $ M_{\phi_j}$ and its adjoint $M_{\phi_j}^*$ for each $j$,
 equals the number of
 components of the complex manifold $\mathcal{S}_{\Phi }= \{(z,w)\in \Omega^2:  \Phi (z)=\Phi (w),\, z\not\in \Phi ^{-1}(\Phi  (Z))\},$
 where $Z$  is the zero variety of the Jacobian $J\Phi $ of $ \Phi.$ This extends the main result in \cite{DSZ} in high dimensional  complex domains.  Moreover we show that the von Neumann algebra $\mathcal{V}^*(\Phi ,\Omega) $ may not be abelian in general although   Douglas,  Putinar and  Wang  \cite{DPW}  showed that
 $\mathcal{V}^*(\Phi ,\mathbb{D})$ for the unit disk $\mathbb{D}$ is abelian.

\section{Introduction}
%Steven G. Krantz. Function Theory of Several Complex Variables, AMS Chelsea Publishing, Providence, Rhode Island, 1992.

Let $\Omega$   be a bounded domain in the $d$-dimensional complex space $\mathbb{C}^d$, and
$dV$  the  Lebesgue measure on $\Omega$.  The Bergman space $L_a^2(\Omega)$
is the Hilbert space consisting of  all  holomorphic functions on $\Omega$ which are square
 integrable with respect to $dV$.
   For a bounded holomorphic function $ \phi  $ on   $\Omega$,
 let $M_{\phi}$ be the multiplication operator with the symbol $\phi $   on  $L_a^2(\Omega)$, given by
$$M_{\phi} f=\phi  f, \, f\in L_a^2(\Omega).$$
For a tuple $\Phi =(\phi_1, \cdots , \phi_n)$,
% let $\{M_{\Phi}\}'$ denote the commutant of
 %\mbox{$\{M_{\phi_1}, \cdots , M_{\phi_n}\}$} which consists of bounded operators on %$L_a^2(\Omega)$
 %commuting with $M_{\phi_1},$ $M_{\phi_2},$  $\cdots$,   $M_{\phi_n}.$
 let $\mathcal{V}^*(\Phi ,\Omega) $ denote the von Neumann algebra consisting of
 bounded operators on $L_a^2(\Omega)$ which commute with both $ M_{\phi_j}$ and its adjoint $M_{\phi_j}^*$ for each $j$.
 In fact,  as the range of an orthogonal projection  in $\mathcal{V}^*(\Phi ,\Omega) $
 must be a  joint reducing subspaces of $\{ M_{\phi_j}  :1\leq j \leq n \}$,  and vice versa,   we have  a natural correspondence between orthogonal projections in $\mathcal{V}^*(\Phi ,\Omega) $ and
   joint reducing subspaces of $\{ M_{\phi_j}  :1\leq j \leq n \}$ \cite{DSZ,GH1,GH4}.
   Many people have made  investigations on commutants, reducing subspaces of
    multiplication operators and von Neumann algebras induced  by those operators for the single-variable
    case \cite{CW,DPW,GH1,GH2,GH3,GH4,HZ1,HZ2,SZZ,T1,T2,Zhu}
 % \cite{Cow1,Cow2,Cow3,Cow4,CW,DPW,GH1,GH2,GH3,GH4,GSZZ,HSXY,Sun,SW,SZZ1,SZZ2,T1,T2,T3,T4,Zhu}
  and some   multi-variable cases have been studied in  \cite{BDGS,DH,GW,Ti,WDH}.

In this paper we study the von Neumann algebra  $\mathcal{V}^*(\Phi ,\Omega) $ via studying the geometric properties of  $\Omega$, analytic properties of $\Phi$ and $L_a^2$-removable sets.

 Given   two  domains $\Omega$ and $\Omega^{\prime}$ in $ \mathbb{C}^d$, recall that
 a holomorphic map $\Psi: \Omega\to \Omega^{\prime}$ is said
 to be
  \emph{a proper map} if
  for each compact subset $K$ of $\Omega'$, $\Psi^{-1}(K)$
is compact. A holomorphic map $\Psi$ on $\Omega$ is said to be
proper if $\Psi(\Omega)$ is open and  $\Psi: \Omega \to  \Psi
(\Omega)$ is proper. The following theorem is one of our main results.

\begin{thm}\label{mainproper} Suppose $\Phi   $ is  a holomorphic proper map   on $ \Omega $.
The dimension of $\mathcal{V}^*(\Phi ,\Omega)$ equals the number of
	components of $\mathcal{S}_{\Phi }$. Here $\mathcal{S}_{\Phi }$ is a complex manifold in $\mathbb{C}^{2d}$:
	$$\mathcal{S}_{\Phi }= \{(z,w)\in \Omega^2:  \Phi (z)=\Phi (w),\, z\not\in \Phi ^{-1}(\Phi  ({Z}))\},$$
	where $Z$  denotes the zero variety of the Jacobian $J\Phi $ of $ \Phi.$
\end{thm}

 On the unit disk $\mathbb{D}$, a holomorphic proper map
$\phi $ is a finite Blaschke product up to composition of a biholomorphic map and  thus Theorem \ref{mainproper} generalizes the main result on the unit disk in  \cite{DSZ}. In \cite{DPW},  Douglas,  Putinar and  Wang   proved that
$\mathcal{V}^*(\phi ,\mathbb{D})$ is abelian.
In Section \ref{six},  we will give examples that   $\mathcal{V}^*(\Phi ,\Omega)$ is not abelian  for some $\Phi, ~\Omega$ in high dimensional spaces (for details, see Example \ref{exam6.1}). We will give an example that  $\Phi   $ is not a holomorphic proper map   on $ \Omega $ but $\mathcal{V}^*(\Phi
,\Omega)$  is of infinite dimension in Example \ref{interior}.

	We will show that the dimension of $\mathcal{V}^*(\Phi ,\Omega)$ equals  the number of
	equivalent classes of   local inverses of $\Phi $. Local inverses and admissible local inverses (for its definition see the end of Section 2) will play an important role in our study. In the
proof of Theorem \ref{mainproper} in Section \ref{sectfif}, we will show that all local inverses of a holomorphic  proper map must be admissible.

 We say that $\mathcal{V}^*(\Phi ,\Omega) $ is  trivial
if $\mathcal{V}^*(\Phi ,\Omega) =\mathbb{C}I$.  This is equivalent to that
$\{ M_{\phi_j}  :1\leq j \leq n \}$ has no nontrivial  joint reducing subspace.
 The following result is a consequence of Theorem \ref{mainproper}, which was contained in Theorems 2.2 and 2.4 in \cite{Gh}.
%shows that in most cases
%$ \mathcal{V}^*(\Phi ,\Omega) $ is nontrivial for a holomorphic proper map $\Phi $ on $\Omega $.
\begin{thm}\cite{Gh} Suppose $\Phi  :\Omega \to \Omega'  $ is a holomorphic  proper map.
	Then           \label{41}
	$ \mathcal{V}^*(\Phi ,\Omega ) $ is nontrivial if and only if $\Phi $ is not   biholomorphic.
\end{thm}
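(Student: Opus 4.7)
The plan is to treat the two implications separately, using Corollary~\ref{exist} as the main engine. Throughout, let $n$ denote the generic sheet number of the proper holomorphic map $\Phi\colon\Omega\to\Omega'$.

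First I would handle the easy direction. If $\Phi$ is biholomorphic, then the change-of-variables formula shows that $U\colon L_a^2(\Omega')\to L_a^2(\Omega)$, defined by $Uf=(f\circ\Phi)\,J\Phi$, is a unitary, and a short computation yields $U^*M_{\Phi_j,\Omega}U=M_{z_j,\Omega'}$ for each component $\Phi_j$. Hence $\mathcal{V}^*(\Phi,\Omega)$ is unitarily equivalent to $\mathcal{V}^*((z_1,\ldots,z_d),\Omega')$. Since the identity map on $\Omega'$ has no local inverse other than the identity itself, Corollary~\ref{exist}, applied on $\Omega'$ with the identity in place of $\Phi$, forces this latter algebra to equal $\mathbb{C}I$, whence $\mathcal{V}^*(\Phi,\Omega)=\mathbb{C}I$.

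For the converse, assume $\Phi$ is not biholomorphic, so $n\geq 2$. I would set $V=\{z\in\Omega : J\Phi(z)=0\}$; by Remmert's proper mapping theorem, $\Phi(V)$ is an analytic subvariety of $\Omega'$. Picking a regular value $w_0\in\Omega'\setminus\Phi(V)$ with fiber $\Phi^{-1}(w_0)=\{\lambda_1,\ldots,\lambda_n\}$, and disjoint neighborhoods $U_i\ni\lambda_i$ on which $\Phi$ is biholomorphic, I would form the local branches
\[
\rho_{ij}=(\Phi|_{U_j})^{-1}\circ \Phi|_{U_i}\colon U_i\to U_j,
\]
which satisfy $\Phi\circ\rho_{ij}=\Phi$ on $U_i$. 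For $i\neq j$, the value $\rho_{ij}(\lambda_i)=\lambda_j\neq\lambda_i$ witnesses that $\rho_{ij}$ is a local inverse of $\Phi$ distinct from the identity.

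The hard part will be to verify that such a $\rho_{ij}$ is \emph{admissible} in the sense of Section~\ref{sect2}, i.e., that its analytic continuations assemble into a bounded operator $\mathcal{E}_{[\rho_{ij}]}$ on $L_a^2(\Omega)$ belonging to $\mathcal{V}^*(\Phi,\Omega)$. The key observation I would exploit is that $\Phi\colon\Omega\setminus\Phi^{-1}(\Phi(V))\to\Omega'\setminus\Phi(V)$ is an unbranched $n$-sheeted covering, so each $\rho_{ij}$ continues analytically along every path avoiding $\Phi^{-1}(\Phi(V))$. Theorem~\ref{removst2} then implies $\Phi^{-1}(\Phi(V))$ is $L_a^2$-removable in $\Omega$, which guarantees $\mathcal{E}_{[\rho_{ij}]}h\in L_a^2(\Omega)$ for every $h\in L_a^2(\Omega)$ and hands over boundedness. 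Having produced a non-identity admissible local inverse of $\Phi$, Corollary~\ref{exist} forces $\mathcal{V}^*(\Phi,\Omega)\neq\mathbb{C}I$, completing the proof.
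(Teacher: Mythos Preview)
Your overall strategy mirrors the paper's, but there is a genuine gap: you are invoking Corollary~\ref{exist} and Theorem~\ref{removst2}, both of which carry the standing hypotheses of Theorem~\ref{thm11}, namely that $\Omega$ satisfies properties~(1) and~(2) and that $\Phi$ is holomorphic on $\overline{\Omega}$. Theorem~\ref{41} assumes \emph{only} that $\Phi\colon\Omega\to\Omega'$ is holomorphic and proper on a bounded domain, so neither result is available. In particular, Theorem~\ref{removst2} needs a \emph{good} zero variety and $F$ holomorphic on $\overline{\Omega}$; your set $V=Z(J\Phi)$ need not be good, and $\Phi$ need not extend across $\partial\Omega$.

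The paper handles this by replacing Theorem~\ref{removst2} with Corollary~\ref{removst3}, which requires only properness: Remmert's theorem gives that $\Phi(Z)$ is a subvariety of $\Omega'$, hence $\Phi^{-1}(\Phi(Z))$ is relatively closed and $L_a^2$-removable, and Proposition~\ref{conn}(ii) (the proper-map case, not~(i)) gives connectedness of the complement. The admissibility argument is then exactly your covering observation, using Theorem~\ref{variety}. Finally, rather than quoting Corollary~\ref{exist}, the paper directly reruns the relevant portion of the proof of Theorem~\ref{thm11} to show $\mathcal{E}_{[\rho]}$ and $\mathcal{E}_{[\rho^-]}$ are bounded operators in $\mathcal{V}^*(\Phi,\Omega)$, with the single modification of substituting Corollary~\ref{removst3} for Theorem~\ref{removst2}. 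Your unitary-equivalence argument for the biholomorphic direction is fine in spirit, but again the appeal to Corollary~\ref{exist} on $\Omega'$ is not justified; it is simpler (and hypothesis-free) to note directly that $\{M_{z_j,\Omega'}\}'' $ has trivial commutant, or to follow the paper and observe that the only local inverse is the identity, whence the local representation~(\ref{2.4}) forces every $S\in\mathcal{V}^*(\Phi,\Omega)$ to be scalar.
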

Since a holomorphic  proper map is  onto \cite[Proposition 15.1.5]{Ru2},  it is biholomorphic
if and only if it is univalent. In %higher dimensional complex spaces,
    multi-variable cases ``nontrivial" holomorphic  proper maps may arise from
polynomials. For example, both $(z_1+z_2,z_1z_2)$ and $(z_1^2
z_2^4,z_1^2  +z_2^4 )$ are holomorphic proper maps on the bidisk
$\mathbb{D}^2$. On the other hand,  $(z_1+z_2,z_1z_2)$
is also a  holomorphic proper map on the unit ball $\mathbb{B}_2$ of $\mathbb{C}^2$, but $(z_1^2
z_2^4,z_1^2  +z_2^4 )$ is not in $\mathbb{B}_2$. More details about these maps will be discussed
in Section \ref{six}.

 For  a local inverse, let  $[\rho]$ denote  the equivalent class of $\rho$,
  which consists of all  analytic  continuations of $\rho$.
We define an operator $\mathcal{E}_{[\rho]}$ on the
Bergman space $L_a^2(\mathbb{D})$ to be
\begin{equation}
\mathcal{E}_{[\rho]}h =\sum_{\sigma\in [\rho]} h \circ \sigma \, J\sigma ,  h \in L_a^2(\mathbb{D}),
 \label{beg}\end{equation}  where the sum involves only finitely many terms
 and $J \sigma $  denotes the determinant of the Jacobian of $\sigma$ for a
holomorphic map $\sigma.$ The local inverses
 can be defined for a holomorphic map $F:\Omega\to \C^d$.  To extend the definition of the operator $
  \mathcal{E}_{[\rho]}$ in (\ref{beg})  to high dimensional domains, we need to study $L^2_a$-removable sets.
 Indeed,    Douglas, Sun and the second author \cite{DSZ}  have proved that
    $\mathcal{V}^*(\phi , \mathbb{D})$ equals the linear span of $\mathcal{E}_{[\rho]}$ where $\rho$ runs over
all  admissible local inverses of each finite  Blaschke product $\phi$, the dimension   of $\mathcal{V}^*(\phi, \mathbb{D})$ equals the number of components of a Riemann surface
  $\mathcal{S}_\phi$ defined as in Theorem \ref{mainproper}.

  For general holomorphic maps, we have  the following theorem.

%  All star-shaped
%domains with smooth boundary, circled bounded domains and strictly
%pseudoconvex domains  satisfy     properties (1) and (2). In fact,
%if for each point $\lambda$ on $\partial \Omega$,
% there is a neighborhood $V$ of $\lambda$ such that
%  all points $z$ in $V \cap \partial \Omega$ enjoy this property:
% up to a rotation, Im\,$z_d$ can be presented as a function of $z_1,\cdots,z_{d-1}$ and Re\,$z_{d}$,
% then $\Omega$ enjoys   properties (1).
% A convex domain always satisfies (2).
% A  strictly pseudoconvex domain $\Omega$ with a smooth $C^2$-smooth boundary   satisfies   (1) and (2).
% To see this, the smoothness of boundary gives (1);
%  and strict pseudoconvexity shows that $\Omega$ is
% locally biholomorphically equivalent to strictly convex domains \cite{Kr},
%   which gives (2). On the other hand,  we will show that not all bounded domains $\Omega$ satisfy
% (1) or (2) in  Example \ref{examdom}.

 \begin{thm}\label{thm1.1}  Suppose  that  $\Omega$ is a bounded domain in $\C^d$. If
 the interior of  $\overline{\Omega}$ equals $\Omega$, and % \linebreak
 $\Phi  :\Omega\to \C^d$ is  holomorphic  on a neighborhood of $\overline{\Omega}$  such that
 the image of  $\Phi  $ has an interior point,
 then  $\mathcal{V}^*(\Phi ,\Omega)$  is a finite dimensional von Neumann algebra. Moreover
  $\mathcal{V}^*(\Phi ,\Omega)$ is generated by   \label{thm11}
  $\mathcal{E}_{[\rho]}$, where $\rho$ run over admissible local inverses  of $\Phi.$
    \end{thm}

\begin{rem}
\begin{itemize}
	\item[(1)]   If   the image of  $\Phi  $ has no  interior point, Example \ref{interior} shows that $\mathcal{V}^*(\Phi
	,\Omega)$  may be of infinite dimension.
	\item [(2)] The condition on $\Omega$ in the above theorem is equivalent to that the boundary $\partial \Omega$ of $\Omega$ is not "huge", i.e,  each point of $\partial
	\Omega $ is a limit point of the interior points of  $\mathbb{C}^d\setminus\!\Omega$. This observation is contained in the following equivalent statements:
	 \begin{itemize}
		\item[(A)]  The interior of $\overline{\Omega}$ equals
		$\Omega$;
		\item [(B)] For each point $\zeta\in \partial
		\Omega $ and any open neighborhood  $\mathcal{O}(\zeta)$ of
		$\zeta$, $\mathcal{O}(\zeta)\setminus\!\Omega$ contains an open ball.
	\end{itemize}
%	Using (B) and the fact that {a domain $\Omega$ in $\mathbb{C}^d$ with $C^1$-boundary has the %property that the interior of $\overline{\Omega}$ equals
%		$\Omega$ } \cite[p. 117]{Kr}, \cite[p. 52]{Ran},
	 (B) immediately implies that   domains with $C^1$-boundary,   star-shaped
	domains, circled bounded domains, convex
domains (bounded symmetric domains), and  strictly pseudoconvex domains satisfy the condition
	on $\Omega $ in Theorem \ref{thm1.1}.
	
 To get the above equivalence, first suppose that (B) holds.  If (A)
  fails, then there is a point  $\lambda$ in the interior of
  $\overline{\Omega}$ and in
  $\partial \Omega $. So there is a neighborhood $\mathcal{N}$ of
  $\lambda$
  such that $\mathcal{N}\subseteq \overline{\Omega}$. Thus  the point
  $\lambda$ is not a limit point of
   $\mathbb{C}^d\setminus\!\overline{\Omega}.$ This contradicts (B) that
    $\lambda$ can
  be    approximated by a sequence of interior points
  in $\mathbb{C}^d\setminus\!\Omega.$

  Conversely,  suppose (A) holds.  If (B) fails, then
 for some point $\zeta\in \partial
\Omega $, there is a neighborhood $\mathcal{O}(\zeta)$ of
  $\zeta$ such that $\mathcal{O}(\zeta)\setminus\!\Omega$ contains no open ball. Thus    there is no interior point of
   $\mathcal{O}(\zeta)\setminus\!\Omega$ in $\mathcal{O}(\zeta)$, and so we have that $\mathcal{O}(\zeta)$ is contained in  $\overline{\Omega}.$
  This gives that $\zeta$ is an interior point of $\overline{\Omega}$, hence by (A), $\zeta$ is an interior point of $\Omega$. But it  contradicts that $\zeta$ is in the boundary $ \partial \Omega$.  Thus $(B)$
  holds.

\item[(3)]  Although a pseudo-convex domain with $C^1$ boundary
    satisfies the condition in  Theorem \ref{thm1.1},
  the following example shows that some pseudo-convex domains may not enjoy (A).
  % In addition, we mention that (2) is also equivalent to
  %$\overline{\mathbb{C}^d -\overline{\Omega}} \supseteq \partial \Omega$.
\begin{exam} \label{examdom}
%In one complex dimension,   every open domain is pseudo-convex.
The domain
 $\mathbb{D}\setminus\![0,1]$ is   a pseudo-convex domain whose interior of its closure does not equal itself as it is conformally equivalent to the unit disk $\mathbb{D}$.

Let  $\Omega=(\mathbb{D}\setminus\![0,1])\times\mathbb{D} $. Since
the biholomorphic image of a pseudo-convex domain is pseudo-convex and
  $\Omega$ is biholomorphic to $\mathbb{D}^2$,  $\Omega$ is pseudo-convex. But the interior of $\overline{\Omega}$
  is not $\Omega.$
\end{exam}

\item[(4)] In \cite[Theorem 6.1]{Ti}, for bounded smooth pseudoconvex domains
with the following two assumptions, Tikaradze showed that $\mathcal{V}^*(\Phi ,\Omega)$ is  isomorphic to
the algebra of locally constant functions on $W_\Phi$ under convolution product where $W_\Phi$ is a subset of the complex manifold:
$$\mathcal{S}_{\Phi }= \{(z,w)\in \Omega^2:  \Phi (z)=\Phi (w),\, z\not\in \Phi ^{-1}(\Phi  (\overline{Z}))\},$$
and $W_\Phi$ is defined in \cite[Definition 3.2]{Ti}.

{\bf Assumption 1.} For any $z\in \partial \Omega$,   $A^\infty (\Omega)\cap I(z)$ is dense in $L^{2}_{a}(\Omega)$ where $A^\infty (\Omega)$ is the set of all holomorphic functions on $\Omega$
which are $C^\infty$
smooth on $\overline{\Omega},$ and
$ I(z)$ denotes the ideal of holomorphic
functions on $\overline{\Omega}$
which vanish on $z$.

{\bf Assumption 2.}  There exists a nonzero function $g\in A^\infty (\Omega)$ such that $g$ vanishes on $\Phi ^{-1}(\Phi  (\overline{Z}))$ where $Z$ the zero variety of the Jacobian $J\Phi $ of $ \Phi$.

 As a domain $\Omega$ in $\mathbb{C}^d$ with $C^1$-boundary has the property that the interior of $\overline{\Omega}$ equals
 $\Omega$  \cite[p. 117]{Kr}, \cite[p. 52]{Ran},
 Theorem  \ref{thm1.1} suggests that for connected bounded  domains with $C^1$-boundary, Theorem 6.1 in \cite{Ti} may  hold.
 \end{itemize}
\end{rem}

 Theorem \ref{thm11}  also gives the following criterion when
$\mathcal{V}^*(\Phi ,\Omega)$ is nontrivial.
\begin{cor} \label{34} Suppose  that  $\Omega$ is a bounded domain. If
 the interior of  $\overline{\Omega}$ equals $\Omega$, and % \linebreak
 $\Phi  :\Omega\to \C^d$ is  holomorphic  on a neighborhood of $\overline{\Omega}$  such that
 the image of  $\Phi  $ has an interior point,
  then $\mathcal{V}^*(\Phi ,\Omega)$ is nontrivial if and only if  \label{exist}
 there exists   an admissible local inverse of $\Phi $ distinct from the identity map.
  \end{cor}

On the complex plane $\mathbb{C}$,   the assumption in Corollary \ref{exist}  says that $\Phi$ is nonconstant and holomorphic over
 $\overline{\Omega}$.
  On the unit disk, Corollary \ref{exist}
is implicitly contained in \cite{T1}. On a polygon  $\Omega$, $\mathcal{V}^*(\Phi ,\Omega)$
is often trivial and the nontrivialness of $\mathcal{V}^*(\Phi ,\Omega)$
implies some geometric property of   $\Omega$ \cite{HZ1,HZ2}.

 To get Theorem \ref{thm1.1}, we  need to study $L_a^2$-removable sets.
Let $A= \Phi ^{-1}(\Phi  (\overline{Z})),$
 where $Z$ is the zero variety of the Jacobian $J\Phi $ of $ \Phi.$   $A$ is a relative closed subset of $\Omega $ but ``small". For each function $h$ in $L_a^2(\Omega)$,  $ \mathcal{E}_{[\rho]}h$ is
  a well-defined holomorphic function on $\Omega\setminus\! A$.
   The function $ \mathcal{E}_{[\rho]}h$ lies in $L_a^2(\Omega\setminus\!A)$.
  In the case of the unit disk,
$A$ is a discrete subset of $\mathbb{D}$, and thus by complex
analysis one  easily gets that every function $f\in
L_a^2( \mathbb{D}\setminus\!A)$ can extend analytically to $\mathbb{D}$ and the extension belongs to $L_a^2( \mathbb{D}).$
However, if $\Omega$ is a higher dimensional domain,      $A$
 is more complicated than a zero variety, and hence $A$ is far from
discrete. We must show that each function in
$L_a^2(\Omega\setminus\!A)$ extends holomorphically to $\Omega.$ This naturally
involves a  problem of removability. Precisely, a relatively closed
subset $\mathcal{E}$ of $\Omega$ is called \emph{$L_a^2$-removable} in
$\Omega$ if
  each function in $L_a^2(\Omega\setminus\! \mathcal{E})$ extends holomorphically to a function
  in $L_a^2(\Omega )$ \cite{Bj,Car}.  An improved version of the Riemann Removable Singularity Theorem \cite{Be,Bo} states that   a zero variety of a domain $\Omega$ in $\mathbb{C}^d$ is   $L_a^2$-removable  in $\Omega $.

  For a subset  $E$ of $\Omega$, $\overline{E}$   denotes
the closure of $E$ in $\overline{\Omega}$.  A subset $E$ of $\Omega$ is called a zero variety of $\Omega$ if there is a nonconstant holomorphic function $f$ on $\Omega$ such that
$$E=\{z\in \Omega: f(z)=0\}.$$
A subset $E$ of $\Omega$ is called a    {local} zero variety if
 for each point  $\lambda \in {\Omega}$, there are an open neighborhood $U$ of $\lambda$ and a {nonconstant} holomorphic function $h_{\lambda}$ on $U$ such that
\begin{equation}\label{localzero}
U\cap \overline{E}\subset \{ z \in U\cap \overline{\Omega}: h_{\lambda}(z)=0\}.
\end{equation}
Clearly, every analytic set in \cite{Ch} of $\Omega$ is a local zero variety.
A local zero variety $E$ of $\Omega$ is said to be {\sl good}      if (\ref{localzero}) holds for each point $\lambda$ in $\overline{\Omega}$.
  For example, if $h$ is a
nonconstant holomorphic function over $\overline{\Omega}$, then
$\{z\in \Omega : h(z)=0\}$ is a  good {local} zero variety of $\Omega.$ For
   a bounded planar domain $\Omega$, %  in $\mathbb{C}$,
    a subset of $\Omega$ is a good local zero variety
 if and only if it is  a finite set. To see this, we need to show
 that  a good local zero variety $E$ of  a bounded planar domain $\Omega$ is  a finite subset.
 Otherwise, if $E$ were
   an infinite subset of  $\Omega$, then $E$
    would contain one  accumulation point $w$ on $\overline{\Omega}$.
     By definition, there is a unit disk $U$ centered at $w$ such that
     $U\cap \overline{E}\subset \{ z \in U\cap \overline{\Omega}: h (z)=0\} $
     for some holomorphic function $h$ over $U.$ Then $h$ would be identically zero
     as  $w$ were an accumulation point,
     which is a contradiction.

  We need the following theorem about $L^2_a$-removable sets, which generalizes  the improved version of the Riemann Removable Singularity Theorem \cite{Be,Bo}.
 For a   map $F: \Omega\to \mathbb{C}^d$
 holomorphic over $\overline{\Omega}$,
   let
 $$F^{-1}(F(\overline{E})) =\{z\in \Omega: F(z) \in F(\overline{E})\}. $$

\begin{thm}\label{removst2}  Let  $\Omega$ be a domain in  $\mathbb{C}^d.$  Suppose that a local zero variety $E $  of $\Omega$ is   good    and
 $F: \Omega\to \mathbb{C}^d$ is holomorphic on $\overline{\Omega}$ such
that the image of $F$ contains an interior point.
 Then
   $  F^{-1}(F(\overline{E}))$  is   $L_a^2$-removable in $\Omega$.
\end{thm}

Theorem \ref{removst2} immediately gives that  each good {local} zero variety of $\Omega$
is $L_a^2$-removable in $\Omega$.
%This implies that  a zero variety is also
%$L_a^2$-removable  \cite{Be,Bo}. To see this, letting
%$$\Omega_n =\{ z\in \Omega : d(z, \partial \Omega )>\frac{1}{n}\},$$
%we have
%$$\Omega =\cup_{n=1}^{\infty} \Omega_n$$
%and
%$$\overline{\Omega_{n}}\subset \Omega_{n+1}$$
%for each $n=1, 2, \cdots.$ For each zero variety $Z$ of $\Omega$ and each $n$, let
%$$Z_n=Z\cap \Omega_n.$$
%Then $Z_{n}$ is a good variety of $\Omega_{n}$. For each $f$ in $L^{2}_{a}(\Omega -Z)$, we have
%that $f$ is in $L^{2}_{a}(\Omega_n-Z_n)$. Theorem \ref{removst2} gives that $f$ is in $L^{2}_{a}(\Omega_n)$. %Thus $f$ is holomorphic on $\Omega_n$ and so $f$ is holomorphic on $\Omega$ as we have
%$$\Omega =\cup_{n=1}^{\infty} \Omega_n.$$
%Since the zero variety $Z$ has measure zero in $C^d$, we obtain that $f$ is in $L^2_{a}(\Omega)$. This gives
%that the zero variety $Z$ is $L^2_a$-removable.

%Finally, we will study  how the von Neumann $\mathcal{V}^*(\Phi ,\Omega)
% $ and $\{M_{\Phi, \Omega}\}'$  vary under the deformation of the domain
% $\Omega$. In multi-variable
% case they diverge a lot. Perhaps    the Hartogs phenomena is behind this.
% Under a slight deformation  $\widetilde{\Omega}$ of $\Omega,$
%the dimension of $\mathcal{V}^*(\Phi ,\widetilde{\Omega}) $ does not increase and
 %$\mathcal{V}^*(\Phi ,\widetilde{\Omega}) $
%becomes  trivial (Proposition \ref{deform1}  and Theorem \ref{deform}).
%We will show that  a simple generalization of Thomopson theorem on the commutant of multiplication %operators \cite{T1}, \cite{T2}  may
%fail in the multi-variable case.

\vskip1mm
This paper is arranged as follows. Section \ref{sect2} contains some preliminaries such as the notion of admissible local inverse,
 and some properties of holomorphic proper maps.
In Section \ref{sect3}  a new approach is presented to prove the improved version of the Riemann Removable Singularity Theorem \cite{Be,Bo},  and using some ideas of the approach, we will present the proof of   Theorem \ref{removst2}.
     In Section \ref{sect4}
  we will present the proof of Theorem \ref{thm11} and  show % one will see
   how admissible local inverse
  matches   $L_a^2$-removable property. Section \ref{sectfif} contains
   the proofs of Theorems  \ref{mainproper} and \ref{41}.
   In Section \ref{six}  we  will provide many examples of $\mathcal{V}^*(\Phi,\Omega) $
   and  show that $\mathcal{V}^*(\Phi,\Omega) $ has fruitful structures.
 %In Section \ref{sect5} we will
 % study how the von Neumann algebra $\mathcal{V}^*(\Phi ,\Omega)
 %$ and the commutant $\{M_{\Phi ,\Omega}\}'$  vary under the deformation of the domain
 %$\Omega$.

%Theorem 14.4.9

%p 301, 15.1.6 and

\section{\label{sect2}Preliminaries}

 In this section, we will introduce some  notations and give definitions of local inverses and the admissible local inverse  in multi-variable case.  Some properties of holomorphic proper maps in \cite{Ru2}  will be stated as they will be used later.

 Let $F:\Omega\to \mathbb{C}^d$ be a holomorphic map. Let    $Z$
be the set $Z(JF)$ of the zeros of the determinant of the Jacobian $JF$ of the map $F$.
The image $F(Z)$ is
 called \emph{the critical set} of $F$, each point in $F(Z)$ is called a critical value, and
each point in $F(\Omega)\setminus\!F(Z)$ is called a regular point.

Let $\Omega$ and $\Omega^{\prime}$  be two  domains in $ \mathbb{C}^d$. In the introduction,
it is mentioned that
 a holomorphic map $\Psi: \Omega\to \Omega^{\prime}$ is called  a proper map if
  for each compact subset $K$ of $\Omega'$, $\Psi^{-1}(K)$
is compact. % A holomorphic map $\Psi$ on $\Omega$ is called
%proper if $\Psi(\Omega)$ is open and  $\Psi: \Omega \to  \Psi
%(\Omega)$ is proper. In particular, if $\Psi$ is holomorphic on
%$\overline{\Omega}$, then $\Psi$ is proper on $\Omega$ if and only
%if $\Psi(\Omega)$ is open and $\Psi (\partial \Omega)\subseteq
%\partial  \Psi(\Omega) .$
 For example, a holomorphic proper function from the unit
disk $\mathbb{D}$ to $\mathbb{D}$ is just a finite Blaschke
product. More generally, a holomorphic map $\Phi $ from the polydisk
$\mathbb{D}^d$ to $\mathbb{D}^d$ is proper if and only if
  $$\Phi (z)=(\phi _1(z_1),\cdots, \phi _d(z_d))$$
   up to a permutation of coordinates where
 $\phi _j(1\leq j \leq d)$ are finite Blaschke products \cite[Theorem 7.3.3]{Ru1}.
A holomorphic proper map is   a surjective open map \cite[Proposition
15.1.5]{Ru2}.  The following result is contained in Theorem 15.1.6 in
\cite{Ru2}.
\begin{thm} \label{prop}  Suppose  $F: \Omega \to  \mathbb{C}^d$ is a holomorphic function
and for each point $w\in \C^d$, $F^{-1}(w)$ is compact. Then $F$ is
an open map.
\end{thm}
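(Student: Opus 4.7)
The plan is to reduce the statement to the classical fact that a holomorphic map between equidimensional complex manifolds with discrete fibers is open, so the essential work is to promote the compactness of each fiber to discreteness. Once we know $F^{-1}(w)$ is finite for every $w$, the openness of $F$ follows from the local normal form of a finite holomorphic map (a branched covering), which is standard in Rudin's book and in Gunning--Rossi.

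First I would show that every fiber $F^{-1}(w)$ is a finite set. Each fiber is an analytic subvariety of $\Omega$, namely the common zero set of the $d$ holomorphic functions $F_1-w_1,\dots,F_d-w_d$. Suppose a fiber had a positive-dimensional irreducible component $V$. Since $V$ is compact in $\mathbb{C}^d$, each coordinate function $z_j$ attains its maximum modulus on $V$ at an interior point of $V$, which by the maximum principle applied on regular points of $V$ forces $z_j$ to be locally constant on $V$. This contradicts positive dimensionality. Therefore every irreducible component of $F^{-1}(w)$ is zero-dimensional, and so $F^{-1}(w)$ is a discrete closed set; being compact as well, it must be finite.

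Next, fix $p\in\Omega$ and set $w_0=F(p)$. Choose a closed polydisc (or ball) $\overline{P}\subset\Omega$ centered at $p$ so small that $F^{-1}(w_0)\cap\overline{P}=\{p\}$; such a $P$ exists because $F^{-1}(w_0)$ is finite. Then $F-w_0$ does not vanish on $\partial P$, and by compactness there is $\delta>0$ with $\|F(z)-w_0\|\ge 2\delta$ for $z\in\partial P$. I would now invoke a Rouché-type / mapping-degree argument: the holomorphic map $F-w_0\colon P\to\mathbb{C}^d$ has an isolated zero at $p$, so its local degree at $p$ (the dimension of $\mathcal{O}_{\Omega,p}/(F_1-w_{0,1},\dots,F_d-w_{0,d})$, equivalently the local topological degree) is a strictly positive integer $m$. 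For every $w$ with $\|w-w_0\|<\delta$, the map $F-w$ still avoids $0$ on $\partial P$ and is a small perturbation of $F-w_0$, so its degree on $P$ equals $m>0$; in particular $F^{-1}(w)\cap P\ne\emptyset$. Hence $F(P)\supset B(w_0,\delta)$, so $F$ is open at $p$. Since $p$ was arbitrary, $F$ is an open map.

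The only genuinely nontrivial ingredient is the positivity of the local mapping degree of a holomorphic map at an isolated zero and its homotopy invariance under small perturbations; this is precisely the content of the finite holomorphic map structure theorem that underlies Rudin's Theorem 15.1.6, so I would cite it rather than re-derive it. The main obstacle, if one wanted a fully self-contained argument, is exactly this degree-theoretic step: once it is in hand, both steps above are routine.
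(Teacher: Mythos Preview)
Your proposal is correct. The paper does not prove this theorem at all: it simply states that the result ``is covered by [Theorem 15.1.6]{Ru2}'' (Rudin, \emph{Function theory in the unit ball of $\mathbb{C}^n$}) and moves on. Your sketch unpacks exactly the content behind that citation --- first reducing compact fibers to finite fibers via the maximum principle on a positive-dimensional compact subvariety of $\mathbb{C}^d$, then invoking the positivity and homotopy invariance of the local degree of a holomorphic map at an isolated preimage --- and you yourself identify that the nontrivial degree step is what Rudin 15.1.6 provides. So you and the paper end up at the same reference; you simply supply the surrounding argument that the paper omits.
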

Moreover a  holomorphic proper map is   an $m$-folds map and
its critical set enjoys  the following good properties
\cite[Theorem 15.1.9]{Ru2}.
\begin{thm} For two domains $\Omega$ and $\Omega'$ in $\mathbb{C}^d$, suppose $F:\Omega\to \Omega'$ is a
holomorphic proper function. Let $\sharp (w)$ denote the number of elements in
$F^{-1}(w)$ with $w\in \Omega'$.  \label{variety} Then the following hold:
\begin{itemize}
  \item[$(1)$] There is an integer $m$ such that $\sharp (w)=m$ for all regular values $w$ of $F$
  and  $\sharp (w')<m$ for all critical values $w'$ of $F$;
  \item [$(2)$] The  critical set of $F$ is a zero variety in $\Omega'.$
\end{itemize}
\end{thm}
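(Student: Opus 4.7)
The plan is to exploit properness together with the equidimensional (complex dimension $d$ on each side) nature of $F$. First, for each $w \in \Omega'$ the fiber $F^{-1}(w)$ is a compact analytic subset of $\Omega$. Since $F$ is surjective by the already-cited Proposition 15.1.5 of \cite{Ru2}, the Jacobian determinant $JF$ cannot vanish identically, for otherwise the image $F(\Omega)$ would have complex dimension strictly less than $d$. Hence $Z = \{JF = 0\}$ is a proper analytic hypersurface of $\Omega$, and each fiber meets $Z$ in a set of complex dimension zero; combined with compactness this forces every fiber to be finite.

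At any regular point $z$ the inverse function theorem exhibits $F$ as a local biholomorphism, so $\sharp$ is locally constant on the regular locus. Applying Remmert's proper mapping theorem to $F|_Z$ shows that $F(Z)$ is an analytic subset of $\Omega'$, and its complement $\Omega' \setminus F(Z)$ is open, dense and connected; therefore $\sharp$ takes a common constant value $m$ on $\Omega' \setminus F(Z)$. To obtain the strict drop at a critical value $w_0$, I would invoke the local degree: properness realizes $F$ near each $z \in F^{-1}(w_0)$ as a finite branched cover onto a neighborhood of $w_0$ of some local multiplicity $k_z \geq 1$, and counting preimages of a nearby regular value yields $\sum_{z \in F^{-1}(w_0)} k_z = m$. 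If every $k_z$ equaled $1$, the inverse function theorem would force $JF(z) \neq 0$ at every $z \in F^{-1}(w_0)$, contradicting that $w_0$ is critical; hence some $k_z \geq 2$ and $\sharp(w_0) < m$.

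For part (2), I would note that $Z$ has pure complex codimension one in $\Omega$ and $F|_Z$ has finite fibers, so by Remmert's theorem $F(Z)$ is an analytic subset of $\Omega'$ of pure complex dimension $d-1$. Since codimension-one analytic germs in $\mathbb{C}^d$ are locally principal, $F(Z)$ is locally the zero set of a single holomorphic function, i.e., a zero variety in $\Omega'$ in the paper's terminology. The principal obstacle, I expect, is the branched-covering picture used in the preceding paragraph: justifying the local multiplicity $k_z$ and the equality $\sum k_z = m$ requires a careful application of the Weierstrass preparation theorem together with properness, to ensure that the local degrees truly add up to the global fiber count rather than being inflated or undercounted by behavior near the boundary of the local chart.
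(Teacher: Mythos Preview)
The paper does not prove this theorem at all: it is quoted verbatim from Rudin, \emph{Function theory in the unit ball of $\mathbb{C}^n$}, Theorem 15.1.9, and is used as a black box throughout. So there is no ``paper's own proof'' to compare against; your sketch is essentially the standard textbook argument that Rudin himself gives.

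A couple of remarks on the sketch itself. Your finiteness-of-fibers step is slightly roundabout: the clean argument is simply that each $F^{-1}(w)$ is a compact analytic subset of a domain in $\mathbb{C}^d$, and any such set is finite (a positive-dimensional irreducible analytic set in a domain is noncompact by the maximum principle applied to the coordinate functions). You do not need to separate the fiber into its intersection with $Z$ and with $\Omega\setminus Z$. For part~(2), your Remmert-plus-local-principality argument shows $F(Z)$ is a pure $(d{-}1)$-dimensional analytic subset of $\Omega'$, hence \emph{locally} the zero set of one holomorphic function; passing from this to a single global defining function on an arbitrary domain $\Omega'$ is not automatic (it is a Cousin~II / divisor-class issue). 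In practice the paper only ever uses that $F(Z)$ is a subvariety of $\Omega'$ and hence locally contained in a zero variety (see the proof of Corollary~\ref{removst3}), so this gap, if it is one, is in the paper's phrasing rather than in anything that is actually used.
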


To study local inverses, we need some notations about analytic continuation \cite[Chapter
16]{Ru3}.  A function element  is an ordered pair $(f,D)$, where $D$
is a simply-connected domain and $f$ is a holomorphic function on
$D$. Two  function elements $(f_0,D_0)$  and $(f_1,D_1)$ are  called
direct continuations if $D_0\cap D_1$ is not empty and $f_0(z) $ equals $f_1(z) $
on $D_0\cap D_1$. A curve is
  a continuous map from $[0,1]$ into  $
\mathbb{C}^d$.
  Given a function element $(f_0,D_0)$ and  a curve    $\gamma$   with $\gamma(0)\in D_0$,
if there are  a partition of  $[0,1]$:
     $$0=s_0<s_1<\cdots <s_n=1$$
 and function elements $(f_j,D_j)(0\leq j \leq n)$ such that
   \begin{itemize}
  \item [1.]  $(f_j,D_j)$  and $(f_{j+1},D_{j+1})$
are direct continuation for all $j$ with\linebreak $ 0\leq j\leq n-1
$;
  \item[2.] $\gamma [s_j,s_{j+1}]\subseteq D_j(0\leq j\leq n-1)$ and $\gamma(1)\in D_n$,
       \end{itemize}then   $(f_n, D_n)$ is called \emph{an analytic continuation of $(f_0, D_0)$ along
     $\gamma$} ; and $(f_0, D_0)$
 is called to \emph{admit an analytic continuation along}  $\gamma$.
 In this case, we write   $f_0\sim  f_n$. Clearly, $\sim$ defines an equivalence and we write
$[f]$ for \emph{the equivalent class} of $f$.

Given a family $\{\phi_j\}$ of holomorphic maps  over $\Omega$, and a subdomain $\Delta$ of $\Omega $, a
holomorphic function $\rho: \Delta \to \Omega $ is said to be \emph{a
local inverse of} $\{\phi_j\}$ on $\Delta$ if $\phi_j\circ \rho
=\phi_j$ for all $j.$ In particular,  if $\Phi=(\phi_1,\cdots,
\phi_d)$, $\rho$
 is said to be a {\bf local inverse} of $\Phi$ if $\Phi\circ \rho=\Phi$. %In this paper, we focus mainly
 % On  domains of higher dimension, it is possible that
 %all $\phi_j$ can  extend holomorphically to a
%larger domain $\widetilde{\Omega}$. For example, if
 % $\Omega =\widetilde{\Omega}-K $ where $K$ is a compact subset of $\widetilde{\Omega}$
 % and $\dim \widetilde{\Omega}\geq 2,$
 %  by Hartogs' extension theorem all holomorphic maps $\phi_j$ over $\Omega$   extend holomorphically
%to   $\widetilde{\Omega}$. In such a situation,
% with $\Delta\subseteq \Omega $,   $\rho: \Delta \to  \widetilde{\Omega} $ is
% also called  a local inverse of $\{\phi_j\}$
%on $\Delta$ if $$\phi_j\circ \rho =\phi_j$$ for all $j $ in the sense
%of analytic continuation. A local inverse is called \emph{nontrivial} if it is not the identity map.

% Recall that a subset $E$ of $\Omega$ is called \emph{ a zero variety } of $\Omega$ if
%there is a nonconstant holomorphic function $f$ on $\Omega$ such
%that $E=\{z\in \Omega | f(z)=0\}$. Let
%$$A=\overline{\Phi^{-1}\Big(\Phi(Z(J\Phi))\Big)}.$$ In fact,
%Proposition \ref{conn} tells us that $\Omega\setminus\!A $ is connected.
For a holomorphic map $\Phi:\Omega\to \C^d$, assume the image of $\Phi$ contains an interior point. Let
$$A=\overline{\Phi^{-1}\Big(\Phi(Z(J\Phi))\Big)}$$
where $Z(J\Phi)$ is the set of zeros  of the determinant of the Jacobian $J\Phi$ of $\Phi$.
A
local inverse $\rho$ of $\Phi
:\Omega\to \C^d$ is said to be  {\bf admissible} if      %there is a
% subset $A$ of $\Omega $ such that $A$ is locally contained in a zero variety   % Lebesgue measure zero
  for each curve $\gamma$ in $\Omega\setminus\!A$, $\rho$ admits analytic
continuation with values in $\Omega$.    The notion of admissible  local inverse on the unit disk $\mathbb D$
was first introduced  by Thomson \cite{T1}  for   a discrete set $A$.

\section{\label{sect3} $L_a^2$-removable sets}

In this section  we will present the proof of Theorem \ref{removst2} and show that some sets slightly more general than a zero variety  enjoy
the $L_a^2$-removable property, which is  needed  in the proofs of
Theorems  \ref{mainproper} and \ref{thm11}.

For completeness using a different approach we present a proof of the following result in \cite{Be,Bo}.
Some ideas in the proof will be used in the proof of  Theorem  \ref{removst2}.

 \begin{thm} \label{removst}Suppose that  $E $ is a zero variety of a domain $\Omega$ in $\mathbb{C}^d$.
Then $E$ is   $L_a^2$-removable  in $\Omega $.
\end{thm}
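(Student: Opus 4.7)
The plan is to extend an arbitrary $g\in L_a^2(\Omega-E)$ holomorphically across $E$ in two stages: first across the smooth locus of $E$ by a local Laurent argument, and then across the (thin) singular locus by the classical Riemann second extension theorem. Write $E=\{z\in\Omega:f(z)=0\}$ for some nonconstant holomorphic $f$, and let $E_{\mathrm{sing}}=\{z\in E:\nabla f(z)=0\}$. Observe first that, because $E$ has Lebesgue measure zero in $\Omega$, any holomorphic extension $\hat g$ of $g$ automatically satisfies $\|\hat g\|_{L^2(\Omega)}=\|g\|_{L^2(\Omega-E)}$, so producing a holomorphic extension is the entire content of the theorem.

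Fix a point $p\in E$ with $\nabla f(p)\neq 0$. The implicit function theorem supplies local holomorphic coordinates $(w_1,\dots,w_d)$ on a neighborhood $U=U'\times D$ of $p$, with $U'\subset\mathbb{C}^{d-1}$ a polydisk and $D\subset\mathbb{C}$ a disk, in which $E\cap U=\{w_d=0\}$. The function $g$ is then holomorphic on $U'\times(D\setminus\{0\})$ and square-integrable there. By Fubini, for almost every $w'\in U'$ the slice $g(w',\cdot)$ is holomorphic and square-integrable on $D\setminus\{0\}$; expanding as a Laurent series $\sum_{n\in\mathbb{Z}}a_n(w')w_d^n$ and using that $\int_{|w_d|<r}|w_d|^{2n}\,dA$ diverges for $n\leq -1$, the principal part must vanish for almost every $w'$, and therefore (since each $a_n(w')$ is holomorphic in $w'$, as one sees from a Cauchy integral on a torus in $w_d$) identically. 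So $g$ extends holomorphically to all of $U$.

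By the uniqueness of analytic continuation, the local extensions from the previous step patch into a single holomorphic function $\tilde g$ on $\Omega\setminus E_{\mathrm{sing}}$. I would then verify that $E_{\mathrm{sing}}$ is an analytic subset of $\Omega$ of complex codimension at least two: locally, writing $f=f_1^{n_1}\cdots f_k^{n_k}$ in irreducibles, $E_{\mathrm{sing}}$ is contained in the union of the pairwise intersections $\{f_i=f_j=0\}$ and the singular loci of the irreducible hypersurfaces $\{f_i=0\}$, each of which is a proper analytic subset of an irreducible hypersurface and so has complex dimension at most $d-2$. Riemann's second extension theorem then extends $\tilde g$ holomorphically across $E_{\mathrm{sing}}$ to a function $\hat g$ on $\Omega$, and $\hat g\in L_a^2(\Omega)$ by the measure-zero observation above.

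The main obstacle I anticipate is the codimension bookkeeping: verifying cleanly, in the presence of repeated or multiple irreducible factors of $f$, that the locus where the Laurent argument fails really is analytic of complex codimension at least two, so that Riemann's theorem applies. The rest is routine Fubini/Laurent manipulation together with an appeal to a classical extension theorem; this local-plus-thin-set structure is also what would adapt to $L_a^2$-removability of the richer sets $F(\overline E)$ and $F^{-1}(F(\overline E))$ in Theorem~\ref{removst2}.
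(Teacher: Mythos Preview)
Your route is correct but diverges from the paper after the smooth-locus step. Both arguments first extend $g$ across smooth points of $E$ by a local Laurent computation in coordinates where $E=\{w_d=0\}$. You then cite Riemann's second extension theorem to cross the codimension-two singular set; the paper instead iterates the same Laurent step, replacing $f$ successively by its first partials, second partials, and so on, observing that the bad set after each round is contained in the common zero locus of the next-order derivatives --- since not all derivatives of a nonconstant $f$ can vanish at a point of $\Omega$, the iteration exhausts $\Omega$. The paper's approach is self-contained (no appeal to Riemann II) and, more to the point for the sequel, this iterative mechanism is exactly what gets reused in the proof of Theorem~\ref{removst2}. Your flagged obstacle is indeed the crux, and your sketched containment is not quite right as stated: with $E_{\mathrm{sing}}:=\{f=0,\,\nabla f=0\}$ defined via the \emph{given} $f$, a squared factor (say $f=g^2$ with $g$ irreducible) gives $E_{\mathrm{sing}}=\{g=0\}$, which has codimension one and is \emph{not} contained in pairwise intersections or in the singular locus of the single irreducible component. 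The fix is to pass locally to the square-free defining function $f_1\cdots f_k$ before running step~1; then the true singular locus of $E$ as a variety has codimension at least two and Riemann II applies. The paper's iteration, by contrast, never needs to factor $f$.
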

 \begin{proof} Without loss of generality,  we may assume $d>1.$
  First let us consider the special case that $\Omega$ is the polydisk $\mathbb{D}^{d } $ and
 $E$ is equal to $\{z\in \Omega: z_d=0\}.$ Then $$\Omega\setminus\!E= \mathbb{D}^{d-1}\times (\mathbb{D}\setminus\!\{0\}).$$
We will show that holomorphic functions in
$L_a^2\big(\mathbb{D}^{d-1}\times (\mathbb{D}\setminus\!\{0\})\big)$ can extend
analytically to members in    $L_a^2(\mathbb{D}^d)$.
 Since $\{z^\beta: \beta\in  \mathbb{Z}_+^{d }\}$ is an orthogonal basis of   $L_a^2(\mathbb{D}^d)$,  each function $h$ in $L_a^2(\mathbb{D}^d)$   has the following power series expansion:
$$h(z)=\sum_{\beta  \in   \mathbb{Z}_+ ^{d }}c_\alpha z^\alpha , \, z\in \mathbb{D}^{d },$$
where $\sum_{\alpha\in \mathbb{Z}_+^d}  |c_\alpha|^2 \|z^\alpha\|^2 <\infty.$

Let $g$ be  in
$L_a^2\big(\mathbb{D}^{d-1}\times (\mathbb{D} \setminus\!\{0\})\big)$.
We can write in   Laurent series:
$$g(z) = \sum_{\alpha  \in   \mathbb{Z}_+ ^{d-1 }\times \mathbb{Z} }c_\alpha' z^\alpha, \, z\in \mathbb{D}^{d-1}\times (\mathbb{D}\setminus\!\{0\}).$$
Since \begin{eqnarray*}
 \|g\|^2_{L_a^2\big(\mathbb{D}^{d-1}\times (\mathbb{D} \setminus\!\{0\})\big)}&=&\int_{\mathbb{D}^{d-1}\times (\mathbb{D}\setminus\!\{0\})} |g(z)|^2dV(z)\\
 &=& \sum_{\alpha\in \mathbb{Z}_+^d}  |c_\alpha'|^2 \|z^\alpha\|^2+\sum_{\alpha\in
  \mathbb{Z}_+^{d-1}\times \mathbb{Z}_-}  |c_\alpha'|^2 \int_{\mathbb{D}^{d-1}\times (\mathbb{D} \setminus\!\{0\})}|z^\alpha |^2dV(z),
 \end{eqnarray*}
we have that $c_\alpha' =0$ for $\alpha$ in $\mathbb{Z}_+^{d-1}\times \mathbb{Z}_-$ and hence
$$\|g\|^2_{L_a^2\big(\mathbb{D}^{d-1}\times (\mathbb{D}\setminus\!\{0\})\big)}=\sum_{\alpha\in \mathbb{Z}_+^d}  |c_\alpha'|^2\|z^\alpha\|^2 .$$
 This   gives that
 $$g(z) = \sum_{\alpha  \in   \mathbb{Z}_+ ^{d } }c_\alpha' z^\alpha, $$
and thus  $g$ is in $ L_a^2( \mathbb{D}^d)$.

For general case, let   $E $ be a zero variety of a domain $\Omega$ in $\mathbb{C}^d$. We may assume
 $$E=\{z\in \Omega| f(z)=0\},$$ for some nonconstant holomorphic function $f$ on $\Omega .$
Let $h$ be a    function in $L_a^2(\Omega\setminus\!E).$  We need only to show
 that $h$ extends holomorphically to $\Omega$ since $E $ has  Lebesgue measure zero.
Let
   $H=(z_1,\cdots,z_{d-1},f)$. Then
   $$JH(z) = \frac{\partial f}{\partial
z_d}(z)$$
for $z$ in $\Omega$.
    For each $\lambda\in E,$ if $ \frac{\partial f}{\partial
z_d}(\lambda)$ does not equal $ 0$,
$JH(\lambda)$ does not equal $0$.
Thus there is a neighborhood $U_1 $ containing
$\lambda$ so that $H$ is
biholomorphic on  a neighborhood of  $\overline{ U_1} .$
So there is a biholomorphic map from $ U_1 \setminus\!E$ onto
 $\mathbb{D}^{d-1}\times (\mathbb{D} \setminus\!\{0\})$.
  Using such the biholomorphic map we can establish an isomorphism between $L_a^2(U_1\setminus\!E)$ and $L_a^2\big(
\mathbb{D}^{d-1}\times (\mathbb{D} \setminus\!\{0\})  \big)$.  Since we have shown above that
each holomorphic function in $L_a^2\big(
\mathbb{D}^{d-1}\times (\mathbb{D} \setminus\!\{0\})  \big)$   extends holomorphically
to a function
  in  $L_a^2(\mathbb{D}^d)$,  each holomorphic function  in
$L_a^2(U_1\setminus\!E)$   extends holomorphically to   a function in  $L_a^2(U_1 ).$
  Thus each holomorphic function $h\in
L_a^2(\Omega\setminus\!E ) $ extends holomorphically to  the subset $ \{z\in E
| \frac{\partial f}{\partial z_d}\neq 0\}\cup (\Omega\setminus\!E)=\{z\in \Omega | \frac{\partial
f}{\partial z_d}\neq 0\}\cup (\Omega\setminus\!E)$ of $\Omega $.
Let
$$\Lambda_j=\{w\in \Omega | \frac{\partial f}{\partial z_j}(w)\neq 0\}, \, 1\leq j \leq d,$$
and   for $r=1,2,\cdots$,
$$\Lambda_{j_1,\cdots,j_r}=\{w\in \Omega | \frac{\partial^r f}{\partial z_{j_1}\cdots \partial z_{j_r}}(w)
\neq 0\}, \, 1\leq j_k \leq d, 1\leq k \leq r. $$
 Repeating the above argument with respect to each variable $z_j$,  we have that  $h$
extends holomorphically to
$$\bigcup_{1\leq j \leq d} \Lambda_j \cup (\Omega\setminus\!E).$$
  Replacing  $f$ by $\frac{\partial^r f}{\partial z_{j_1}\cdots \partial z_{j_r}}$ inductively and repeating the above argument give that  $h$ extends holomorphically to
$$(\Omega\setminus\!E) \bigcup [ \bigcup_{1\leq j \leq d} \Lambda_j  ]\bigcup [ \bigcup_{1\leq j,k \leq d} \Lambda_{j,k}]\bigcup \cdots$$
To finish the proof,  we will show that the union equals $\Omega .$
To do this,  for   a point $w_0 $ in
the complement of $$(\Omega\setminus\!E )\bigcup [ \bigcup_{1\leq j \leq d} \Lambda_j  ]\bigcup [ \bigcup_{1\leq j,k \leq d} \Lambda_{j,k}]\bigcup \cdots ,$$ we have
$$ \frac{\partial^r f}{\partial z_{j_1}\cdots \partial z_{j_r}}(w_0)=0,1\leq  j_1,\cdots,j_r \leq d . $$
This gives that the power expansion of $f$ at $w_0$ equal $0$ and hence  $f$ is identically zero on a neighborhood of
$w_0$. Thus $f\equiv 0$ on $\Omega$. This contradicts that $f$ is nonconstant
  on $\Omega$.
\end{proof}

A  closed set $A$   is said to be  \emph{strongly
$L_a^2$-removable}  in $\Omega$ if
 for each   subdomain $U$ in $\Omega$,
  $U\cap A$ is  $L_a^2$-removable in $U$. Clearly, each closed subset of a
 strongly
$L_a^2$-removable set in $\Omega$ is
$L_a^2$-removable in $\Omega$.  Theorem \ref{removst}  implies that a zero variety
is   strongly $L_a^2$-removable in $\Omega$. The following example gives that the   strongly $L_a^2$-removable
property is strictly stronger than the
  $L_a^2$-removable property.

  \begin{exam} Let $\Omega=\mathbb{D}^2$, and
    $A= \frac{1}{2}\overline{\mathbb{D}}\times \frac{1}{2}\overline{\mathbb{D}}.$
    We will show that each function $f\in L_a^2(\Omega\setminus A)$ extends to a function $\widetilde{f}$ in
 $  L_a^2(\Omega )$. To do this, let $V=(\mathbb{D}\setminus\!\frac{1}{2}\overline{\mathbb{D}})^2.$  {\sl An computation gives that there is a constant $C>0$
 such that }
 $$\int_\Omega |z^\alpha|^2 dV(z)\leq C\int_V |z^\alpha|^2 dV(z), \, \alpha \in \mathbb{Z}_+^2.$$
 By Hartogs' extension theorem, each holomorphic function in $\Omega \setminus A$ extends holomorphically to
  $\Omega.$ Thus each function $f$ in $ L_a^2(\Omega\setminus A)$ extends to a holomorphic function $\widetilde{f}$ on $\Omega$, and
  so we have the Taylor expansion of $\widetilde{f}$:
   $$\widetilde{f}(z)=\sum_{ \alpha \in \mathbb{Z}_+^2} c_\alpha z^\alpha,\, z\in \Omega.$$
   Also we have
   \begin{eqnarray*}
   \int_\Omega |\widetilde{f}(z)|^2 dV(z)
    &=&\sum_{ \alpha \in \mathbb{Z}_+^2}| c_\alpha |^2 \int_\Omega |z^\alpha  |^2 dV(z) \\
 & \leq & \sum_{ \alpha \in \mathbb{Z}_+^2} C| c_\alpha |^2\int_V |z^\alpha  |^2 dV(z) \\
 &=& C \int_V |f(z)|^2 dV(z)<\infty.
 \end{eqnarray*}
This gives that  $\widetilde{f}$ is in $L_a^2(\Omega)$, and hence
  $A$ is $L_a^2$-removable   in $\Omega$.

On the other hand,  $A$ is not strongly $L_a^2$-removable   in $\Omega$. To see this,
let   $$U=\frac{1}{2}\mathbb{D}\times \mathbb{D},$$ and define
$$h(z)= \frac{1}{z_2 },\, (z_1,z_2)\in U\setminus A.$$Since  $h$ is bounded on $U\setminus A$,
we have that $h$ is in $ L_a^2(U\setminus A).$ But it is clear that $h$ can not extend to a holomorphic  function on $U$, and
$h$ does not  extend to a function in $L_a^2(U)$.
Therefore, $A$ is not strongly $L_a^2$-removable   in $\Omega$.
  \end{exam}

We have the following lemma about finite union of strongly $L^2_a$-removable sets.

\begin{lem}\label{union} Suppose $A_1,\cdots,A_n$ are strongly $L_a^2$-removable sets  in $\Omega$ with  Lebesgue measure zero.
Then the union of $A_1,\cdots,A_n$ is  also strongly
$L_a^2$-removable in $\Omega$.\end{lem}
\begin{proof} By induction we need only show that if both $A_1$ and $A_2$ are strongly $L_a^2$-removable   in $\Omega$ and have Lebesgue measure zero,
 then $A_1\cup A_2$ is strongly $L_a^2$-removable   in $\Omega$.
 To see this, let   $U$ be a subdomain in $\Omega$. Note that  $A_2$ is relatively closed in $\Omega$
 and $U\setminus\! A_2=U\cap (\Omega\setminus\! A_2)$ is open and hence $U\setminus\! (A_1\cup A_2)$ is also open.
 Suppose $f$ is a function in $L_a^2(U\setminus\! (A_1\cup A_2) )$.
First we prove that $f$ is in $L_a^2(U\setminus\! A_2) $. Since
$U\setminus\! A_2 $ is open,
 $$U\setminus  A_2= \bigcup_i V_i$$
 where each $V_i$ is a connected component of $U\setminus\! A_2.$
  Since the restriction of $f$ on each $V_i\setminus\!  A_1$ is  in $L_a^2(V_i\setminus\!  A_1) $ and
$A_1 $ is strongly $L_a^2$-removable sets  in $\Omega,$ $f$ extends
analytically to a function in $L_a^2(V_i) $.  Thus $f$ is holomorphic in
$U\setminus\!  A_2$. This implies
$$f \in L_a^2(U\setminus\!  A_2)  $$
as the Lebesgue measure of $A_2$ equals zero.
  Since $A_2$ is strongly $L_a^2$-removable   in $\Omega$,
$f$ extends holomorphically  to a function in $L_a^2(U)$.
 We conclude that $A_1\cup A_2$ is strongly $L_a^2$-removable   in $\Omega$.
\end{proof}

  The following  lemma will be used in the proof of Theorem  \ref{removst2}.
   \begin{lem}\label{inverses} Let  $\mathcal{E} $ be a closed and
   strongly $L_a^2$-removable  set in $\mathbb{C}^d$ with the Lebesgue
   measure zero.
   Suppose  $F: \Omega\to \mathbb{C}^d$ is a holomorphic map on $\overline{\Omega}$ such
that the image of $F$ has an interior point.
 Then
   $  F^{-1}(\mathcal{E})$  is strongly  $L_a^2$-removable in $\Omega $.
\end{lem}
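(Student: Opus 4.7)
The plan is to establish local $L_a^2$-removability of $F^{-1}(\mathcal{E})$ at every point of $\Omega$, and then bootstrap to global $L_a^2$-removability by patching via analytic continuation. Since the image of $F$ contains an interior point, $F$ cannot be degenerate, so $JF \not\equiv 0$ on $\Omega$ and $Z := \{z\in\Omega: JF(z)=0\}$ is a zero variety of $\Omega$. In particular $Z$ is already locally $L_a^2$-removable by Theorem \ref{removst}, and has Lebesgue measure zero. I would also note that $F^{-1}(\mathcal{E})$ has Lebesgue measure zero: off $Z$ it is locally a biholomorphic image of $\mathcal{E}$, and $Z$ itself is null.

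Fix $\lambda\in\Omega$ and $\varepsilon>0$; I will produce an open $U\subseteq B(\lambda,\varepsilon)\cap\Omega$ such that $U\cap F^{-1}(\mathcal{E})$ is $L_a^2$-removable in $U$. Three cases: (i) If $\lambda\notin F^{-1}(\mathcal{E})$, shrink $U$ to avoid this closed set and the claim is trivial. (ii) If $\lambda\in F^{-1}(\mathcal{E})\setminus Z$, then $F$ is biholomorphic on a small neighborhood $U$ of $\lambda$ onto an open ball $V\ni F(\lambda)$. Shrinking so that $\mathcal{E}\cap V$ is $L_a^2$-removable in $V$ (possible since $\mathcal{E}$ is locally $L_a^2$-removable), the weighted composition $g\mapsto (g\circ F)\cdot JF$ is an isometric isomorphism between $L_a^2(V)$ and $L_a^2(U)$ (and likewise with $\mathcal{E}$ and $F^{-1}(\mathcal{E})$ removed), so removability transfers to $U$.

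The main case, which is the principal obstacle, is (iii) $\lambda\in Z\cap F^{-1}(\mathcal{E})$: here $F$ may collapse near $\lambda$ and we cannot apply case (ii) directly. The strategy is a two-step extension. Take $U$ a small ball around $\lambda$ and $h\in L_a^2(U-F^{-1}(\mathcal{E}))$. First, at every point $\mu\in (F^{-1}(\mathcal{E})\setminus Z)\cap U$, apply case (ii) in a small neighborhood to extend $h$ across $F^{-1}(\mathcal{E})$ near $\mu$; uniqueness of analytic continuation patches these local extensions to a single holomorphic $\tilde h$ on $U\setminus Z$. Because $F^{-1}(\mathcal{E})$ has measure zero, $\tilde h=h$ almost everywhere on $U\setminus Z$, so $\tilde h\in L_a^2(U\setminus Z)$. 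Now $Z\cap U$ lies in a zero variety of $U$, hence by Theorem \ref{removst} the function $\tilde h$ extends to a member of $L_a^2(U)$, which is the required extension of $h$. This establishes local $L_a^2$-removability of $F^{-1}(\mathcal{E})$.

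Finally, to pass from local to global, given $h\in L_a^2(\Omega-F^{-1}(\mathcal{E}))$, cover $\Omega$ by the local domains $U_\lambda$ furnished above; in each $U_\lambda$ obtain an extension, and observe that any two such extensions agree on the open set $U_\lambda\cap U_{\lambda'}\setminus F^{-1}(\mathcal{E})$ (a dense open subset of $U_\lambda\cap U_{\lambda'}$, since $F^{-1}(\mathcal{E})$ is closed with empty interior), hence they coincide throughout. This produces a global holomorphic extension $\widehat h$ on $\Omega$ that equals $h$ almost everywhere, so $\widehat h\in L_a^2(\Omega)$, completing the proof. The delicate point throughout is the simultaneous presence of two distinct obstructions to removability in case (iii), which the argument disentangles by first peeling off the part of $F^{-1}(\mathcal{E})$ outside the critical set and then invoking Theorem \ref{removst} on the residual critical locus.
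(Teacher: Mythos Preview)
Your proof is correct and follows essentially the same route as the paper: both arguments first extend across $F^{-1}(\mathcal{E})$ at non-critical points by pulling back the local $L_a^2$-removability of $\mathcal{E}$ through the local biholomorphism $F$, and then invoke Theorem~\ref{removst} to extend across the residual critical locus $Z(JF)$. The paper organizes this more compactly by working directly with an arbitrary subdomain $\Omega'\subseteq\Omega$ rather than splitting into cases at a fixed point $\lambda$, but the two-step mechanism and the role of the measure-zero observation are identical.
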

\begin{proof}
Let  $\mathcal{E} $ be a closed and
   strongly $L_a^2$-removable  set in $\mathbb{C}^d$ with the Lebesgue
   measure zero.  Since $F(\Omega )$ has an interior point,  $JF$ does not equal identically zero.
     We will show that $F^{-1}(\mathcal{E})$ has  Lebesgue
   measure zero. To do this, first we observe
$$F^{-1}(\mathcal{E}) \subseteq Z(JF) \bigcup \{z\in \Omega : JF(z)\neq 0 \,\,\  \mathrm{and} \,\,\
 F(z)\in \mathcal{E} \}.$$
  Since $ \{z\in \Omega : JF(z)\neq 0 \,\,\  \mathrm{and} \,\,\
 F(z)\in \mathcal{E} \} $ is contained in a union of countably many biholomorphic images of subsets in $ \mathcal{E}$,
 it must have Lebesgue measure zero. Noting that a zero variety $Z(JF)$ has  the Lebesgue measure zero,
we have that $F^{-1}(\mathcal{E})$ has Lebesgue measure zero.

Let $V$ be an open subdomain in $\Omega$. We will show
that   $F^{-1}(\mathcal{E}) \cap  V$ is  $L_a^2$-removable  in $V$.
 To do this,  we note that
for each $\lambda \in \Omega,$ if $JF(\lambda)\neq 0,$ then there is
an open neighborhood $U(\lambda)$ of $ \lambda $ such that $F$ is
injective on   $\overline{U(\lambda)}$.
 Since
 $$F^{-1}(\mathcal{E})\cap U(\lambda)=\{z\in U(\lambda)| F(z)\in \mathcal{E}\}=
 (F|_{U(\lambda) })^{-1}( \mathcal{E})  ,$$
and $\mathcal{E}$ is strongly $L_a^2$-removable in ${\mathbb C}^d$,  $F^{-1}(
\mathcal{E})\cap U(\lambda) $ is $L_a^2$-removable in $ U(\lambda)$.
Thus for each
$h\in L_a^2(V\setminus\!F^{-1}(\mathcal{E}))$,
 $h$ extends holomorphically to
 $$\cup_{\lambda \in \{z:JF(z)\neq 0\}}U(\lambda ).$$
 Since
 $$V\setminus\! Z(JF)=V \cap \{z:JF(z)\neq 0\}\subset \cup_{\lambda \in \{z:JF(z)\neq 0\}}U(\lambda ),$$
 $h$ extends holomorphically to  $V\setminus\! Z(JF).$  Since
  we have shown above that \linebreak $ \{z\in \Omega : JF(z)\neq 0 \,\,\  \mathrm{and} \,\,\
 F(z)\in \mathcal{E} \} $ has the Lebesgue
   measure zero,  $h$ is in \mbox{$L_a^2(V\setminus\! Z(JF))$}.   As $Z(JF)$ is a zero variety,
   by Theorem \ref{removst}, $h$ extends holomorphically to $V$. Hence we obtain that   $F^{-1}(\mathcal{E }) \cap V$ is   $L_a^2$-removable in $V$.
This means that  $F^{-1}(\mathcal{E} )$  is strongly
$L_a^2$-removable  in $\Omega $.
\end{proof}
To prove Theorem  \ref{removst2}, we need some lemmas from several complex analysis.
% \cite[P. 55, Corollary 1]{Ch} gives the following lemma.
%\begin{lem}\label{dim} Let $E$ and $F$ be analytic subvarieties of a domain in $\mathbb{C}^d$, and $E$ is irreducible and is not contained in
%$F$. Then $ \dim E\cap F  <\dim E.$
%\end{lem}The following lemma is on removing singularities of large codimensions (\cite[p. 298, Proposition 3]{Ch}).
  \begin{lem}\label{dimrem}If $E$ is
    a relatively closed subset of a domain $D$ in $\mathbb{C}^d$ and   of zero Hausdorff measure $h_{2d-2}(E) $,
    then each function holomorphic on $D \backslash E$ has a holomorphic continuation  to $D$.\end{lem}
  Lemma \ref{dimrem} is Proposition 3 in  \cite[p.298]{Ch}. We need a special case of the proposition on \cite[p. 41]{Ch}.

\begin{lem}\label{image2} Let $A$ be an analytic subvariety of a domain in $\mathbb{C}^d,$ and
$F: A\to \mathbb{C}^n$  be a holomorphic map. Then $F(A)$ is contained in a union of
at most countable analytic subvarieties of domains in $\mathbb{C}^n$, of
dimensions not larger than the dimension $\dim F $ of $F$. Consequently,   $\dim F(A)\leq \dim A.$
\end{lem}

\begin{lem} \label{image1} Let
$F:  \overline{\mathbb{D}}^{d-1}\to \mathbb{C}^d$
  be  a holomorphic map.
    Then $F(\overline{ \mathbb{D}}^{d-1})$   is  strongly $L_a^2$-removable in $ \mathbb{C}^d.$
\end{lem}

\begin{proof} Let $r$ denote  the maximal  rank of Jacobian matrix of $F$ on
 $ \overline{\mathbb{D}}^{d-1}$. If $r\leq d-2,$ then the dimension $\dim F$ of $F$ is no larger than $d-2$
  \cite[p. 40]{Ch}.
 Then by Lemma \ref{image2},  $F(\overline{ \mathbb{D}}^{d-1})$  is contained
 in a union of at most countable analytic subvarieties $E_j$ of domains, which are of dimensions $\leq d-2,$ and hence
 their  Hausdorff measures $h_{2d-2}(E_j)$ are zero. Therefore $h_{2d-2}F(\overline{ \mathbb{D}}^{d-1})=0$.
 Since the Lebesgue measure of $F(\overline{ \mathbb{D}}^{d-1})$ equals zero, by Lemma \ref{dimrem},
 we immediately have that  $F(\overline{ \mathbb{D}}^{d-1})$   is  strongly $L_a^2$-removable in $ \mathbb{C}^d.$

\vskip2mm
Now assume  that  the maximal  rank of Jacobian matrix of $F$ on
 $ \overline{\mathbb{D}}^{d-1}$
 is $d-1$. Letting $F=(f_1,\cdots, f_d)$, we may assume that the holomorphic function
 $$ \varphi (z_1,\cdots,z_{d-1})= \det \frac{\partial (f_1,\cdots,f_{d-1})}{\partial (z_1,\cdots, z_{d-1})}$$
  is not identically zero on $ \overline{\mathbb{D}}^{d-1}$. Let
   $$  \overline{\mathcal{Z}}=\{z\in \overline{ \mathbb{D}}^{d-1}: \varphi(z)=0\}.$$
 Let $\Omega $ be any domain  in $\mathbb{C}^d.$ For each $\varepsilon>0,$
let $V(\varepsilon)$ denote the $\varepsilon$-neighborhood of $F(\overline{\mathcal{Z}})$,
$$V(\varepsilon)=\{w\in \mathbb{C}^{d}: \inf_{\lambda\in F(\overline{\mathcal{Z}})}|w-\lambda|<\varepsilon\}$$
 and let   $$\Omega_\varepsilon =\Omega\setminus \overline{V(\varepsilon)},$$
As $F$ is a continuous map,  there is a positive  integer $k$ such that the $\frac{1}{k}$-neighborhood $U(k)$ of $\overline{\mathcal{Z}}$
satisfies $$F(U(k)) \subseteq V(\varepsilon).$$
Thus we have \begin{equation}\label{eqi} F(\overline{\mathbb{D}}^{d-1}) \setminus  V(\varepsilon)
=F\big(\overline{\mathbb{D}}^{d-1} \setminus U(k)\big) \setminus  V(\varepsilon).\end{equation}
Let $$F_0=(f_1,\cdots,f_{d-1}) .$$ For each point $w\in \overline{\mathbb{D}}^{d-1} \setminus U(k)$,
the Jacobian of $F_0$ at $w$ does not vanish, and then there is a ball $U_w$
centered at $w$ such that $F_0$ is univalent  on a neighborhood of $\overline{U_w}$.
 Since $F_0: U_w\to F_0(U_w)$ is a biholomorphic map, for each $(\lambda_1, \cdots , \lambda_{d-1}, \lambda_d)\in F(U_w)$ we get
 $$\lambda_1=f_1(z_1, \cdots , z_{d-1}), \lambda_2=f_2(z_1, \cdots , z_{d-1}), \cdots , \lambda_{d-1}=f_{d-1}(z_1, \cdots , z_{d-1}),$$
 $$\lambda_d=f_d(z_1, \cdots , z_{d-1})=f_d(F_0^{-1}(\lambda_1 , \lambda_2, \cdots , \lambda_{d-1})).$$
 Thus $F(U_w)$ is a zero variety.
 Since   $ \overline{\mathbb{D}}^{d-1} \setminus U(k)$
 is compact, \linebreak $ \overline{\mathbb{D}}^{d-1} \setminus U(k)$
 is contained in a finite union of open balls   $\{U_w\}.$ Then
 $F\big(\overline{\mathbb{D}}^{d-1} \setminus U(k)\big)$ is contained in a union of finitely many
 zero varieties $F(U_w)$, and by  Theorem \ref{removst} and Lemma \ref{union},
  $F\big(\overline{\mathbb{D}}^{d-1} \setminus U(k)\big)$ is
  strongly $L_a^2$-removable in $\Omega_\varepsilon$. By (\ref{eqi}), $F(\overline{\mathbb{D}}^{d-1}) \setminus  V(\varepsilon)$
   is strongly $L_a^2$-removable in $\Omega_\varepsilon$. Thus $F(\overline{\mathbb{D}}^{d-1})$
   is strongly $L_a^2$-removable in $\Omega_\varepsilon$.
\vskip1mm
   For each holomorphic function $h$ in %\linebreak
 $L_a^2(\Omega \setminus F(\overline{\mathbb{D}}^{d-1}) )$, $h$ is in
 $L_a^2(\Omega_\varepsilon \setminus F(\overline{\mathbb{D}}^{d-1}) )$.
  Since $F(\overline{\mathbb{D}}^{d-1})$
   is strongly $L_a^2$-removable in $\Omega_\varepsilon$,  $h $ is in $L_a^2(  \Omega_\varepsilon)$,
    and  by arbitrariness of $\varepsilon$,
    $h$ is holomorphic on  $\Omega \setminus F(\overline{\mathcal{Z}})$.
   Noting that  $\dim \overline{\mathcal{Z}} \leq d-2,$  by Lemma \ref{image2} we have $$\dim F(\overline{\mathcal{Z}})\leq d-2.$$ Then
   the Hausdorff measure  $h_{2d-2}(F(\overline{\mathcal{Z}}))=0,$ and by Lemma \ref{dimrem}
       $h$ extends holomorphically to $\Omega .$ Noting that $h\in
  L_a^2(\Omega \setminus F(\overline{\mathbb{D}}^{d-1}) )$ and the Lebesgue measure of $F(\overline{\mathbb{D}}^{d-1}) $ equals zero, we have $h\in L_a^2(\Omega ).$ Thus $F(\overline{\mathbb{D}}^{d-1}) $ is strongly $L_a^2$-removable in $\Omega $, and so it is
   strongly $L_a^2$-removable in $\mathbb{C}^d$
   to complete the proof.
\end{proof}

Now we are ready to present the proof of Theorem   \ref{removst2}.
\vskip2mm
\noindent \textbf{Proof of Theorem  \ref{removst2}.}
 Let   $E $ be a
good {local}  zero variety of a domain $\Omega$. We will investigate the structure of $\overline{E}$.
To do this, let ${\mathcal R}_E$ be the set of these
  points $\lambda$ in $\overline{E}  $ with the following property:

 \emph{ There is an open neighborhood $W$ of $\lambda$ and a holomorphic function $h$ on $\overline{W}$ such that
$$\overline{E}\cap W= \overline{\Omega}\cap  \{z\in W: h(z)=0\},$$
and $ \{z\in W: h(z)=0\} $ is biholomorphic to  an open set in  $\mathbb{C}^{d-1}$.}
\vskip1mm
Then { ${\mathcal R}_E$ is a relatively open set   in $\overline{E}$.}
Let $ \mathcal{S}_E$  denote   the complement  of ${\mathcal R}_E$ in $\overline{E}$. Hence  $ \mathcal{S}_E$ is a closed set.

  We will show that
  the  Hausdorff dimension of   $\mathcal{S}_E$ is at most $2d-4$.
To do so, let $\lambda$ be a point in  $\overline{E}$. Since $E$ is a good local zero variety, there are a holomorphic function $h$ and an open neighborhood $W$ of $\lambda$ such that
$$W\cap \overline{E}=\{z\in W\cap \overline{\Omega}: h(z)=0\}.$$
Let $E_W=W\cap \overline{E}$, $(\mathcal{R}_E)_W=\mathcal{R}_E\cap W,$ and
$$(\mathcal{S}_E)_W=\mathcal{S}_E\cap W .$$
 {The proposition in} \cite[p. 22]{Ch}  gives that ${\mathcal R}_E$ is a
 complex manifold with dimension $d-1$ and the { proposition in}
 \cite[p. 20]{Ch} implies that $(\mathcal{S}_E)_W$ is the singular locus of
  $E_W$ which is made of the singular points of $E_W$ and
   finitely many complex submanifolds with dimensions less than $d-1$.
    By Corollary 3 \cite[p. 24]{Ch} we have that the Hausdorff dimension of $(\mathcal{S}_E)_W$ is at most $2d-4$.

Since $\mathcal{S}_E$ is compact, $\mathcal{S}_E$ is covered by   finitely many open sets $\{W_i\}_{i=1}^n$ and the Hausdorff dimension of each of $\{(\mathcal{S}_{E})_{W_i}\}_{i=1}^n$ is at most $2d-4$.
Noting
$$\mathcal{S}_E= \cup_{i=1}^{m}(\mathcal{S}_{E})_{W_i},$$
we get that the Hausdorff dimension of
$\mathcal{S}_E $ is at most $2d-4$.

\vskip2mm
The rest of the proof is similar to the proof of Lemma \ref{image1}.
For each $\varepsilon>0,$ let $V(\varepsilon)$ be the
$\varepsilon$-neighborhood of $F(\mathcal{S}_E)$. Since $F$ is holomorphic on $\overline{\Omega}$,  there is a positive integer $k$ such
that for each point $w$ in the $\frac{1}{k}$-neighborhood $\mathcal{N}(k)$ of $\mathcal{S}_E$,
 \begin{equation}F(w)\in V(\varepsilon). \label{eqa}\end{equation}
  This gives that for each point $\lambda$ in $\overline{E}\setminus \mathcal{N}(k)$, $\lambda$ is   in $ \mathcal{R}_E$.
 Thus there is a neighborhood $U_\lambda$ of $\lambda$ such that $ U_\lambda \cap \overline{E} $ is contained in the
image of $\overline{\mathbb{D}}^{d-1} $ under a biholomorphic map,
  and so $F(U_\lambda \cap \overline{E})$ is also contained in the image of
  $\overline{\mathbb{D}}^{d-1} $ under a holomorphic map. By Lemma \ref{image1}, $F(U_\lambda \cap \overline{E})$
  is  strongly $L_a^2$-removable in $ \mathbb{C}^d.$
Since $\overline{E}\setminus \mathcal{N}(k)$ is compact, we have that $\overline{E}\setminus \mathcal{N}(k)$
 is contained in a union of finitely many  sets $ U_\lambda \cap \overline{E} $.
 Thus
  Lemma \ref{union} gives that
   $F(\overline{E}\setminus \mathcal{N}(k))$ is
  strongly $L_a^2$-removable in $\mathbb{C}^d$.
   By (\ref{eqa}),  $$F(\mathcal{N}(k))\subseteq V(\varepsilon),$$
   we have
$$F( \overline{E}) \setminus  V(\varepsilon) \subseteq F(\overline{E}\setminus \mathcal{N}(k)),$$
to obtain that $F( \overline{E}) \setminus  V(\varepsilon)$ is strongly $L_a^2$-removable in $\mathbb{C}^d$.

 For any domain $U$ in $\mathbb{C}^d$ and a function $h$ in $L_a^2(U\setminus F(\overline{E}))$,
  $h$ belongs to \linebreak $   L_a^2(\big(U \setminus  \overline{V(\varepsilon)}\big)\setminus F(\overline{E})).$  Since $F( \overline{E}) \setminus  V(\varepsilon)$ is
strongly $L_a^2$-removable, we have that
  $h$ is in $ L_a^2(U \setminus  \overline{V(\varepsilon)})$ to get that
   $h$ is holomorphic in $U \setminus F(\mathcal{S}_E)$.
 {Since $F$ is holomorphic on a neighborhood of $\overline{E}$,
$F$ satisfies the Lipschitz condition on some neighborhood of the subset $\mathcal{S}_E $ of $\overline{E}$.
  % Lemma \ref{image2}
  Thus \cite[p. 347, Property 4]{Ch} gives that
 the Hausdorff dimension of $ F(\mathcal{S}_E)$ is not
 larger than that of  $ \mathcal{S}_E$, which is  at most $ 2d-4 .$}
 This gives % \cite[p. 41]{Ch}.
     that the Hausdorff measure  $$h_{2d-2}(F(\mathcal{S}_E))=0.$$
Thus by Lemma \ref{dimrem},  we have that $h$ extends holomorphically to $U$
  and hence $h$ is in $ L_a^2(U),$  to get that  $F( \overline{E}) $ is
   strongly $L_a^2$-removable in $ \mathbb{C}^d$.

\vskip2mm

  Noting that $F(\overline{E})$ has the   Lebesuge measure $0$,
 by Lemma \ref{inverses}  we conclude that $F^{-1}(F(E) )$
 is    strongly $L_a^2$-removable $\Omega$ and hence   $L_a^2$-removable in $\Omega$ to complete
 the proof of Theorem  \ref{removst2}.
$\hfill \square$
\vskip2mm
 \begin{rem} The  last two paragraphs above indeed  show that  if  $E $ is  a good local zero variety  of a domain $\Omega$ in  $\mathbb{C}^d,$    and if
 $F: \Omega\to \mathbb{C}^d$ is holomorphic on $\overline{\Omega}$ such
that the image of $F$ contains an interior point, then
  $F( \overline{E}) $ is    strongly $L_a^2$-removable in $ \mathbb{C}^d$, and
 $F^{-1}(F(E) )$
 is    strongly $L_a^2$-removable in $\Omega$.

  % Theorem  \ref{removst2} and its proof  immediately give
%that
%  if  $F: \Omega\to \mathbb{C}^d$ is holomorphic on $\overline{\Omega}$  and the image of $F$ has no  interior point then
%     $  F( \overline{\Omega} )$  is strongly  $L_a^2$-removable  in $\Omega $.

%  {If a {local} zero variety $E$ of a domain $\Omega$ in $\mathbb{C}^d$ is compact in $\mathbb{C}^{d}$,
%then $E$  is  contained in a union of finitely many good {local} zero varieties.  The proof of Theorem  \ref{removst2} gives that
% if $F:E \to \mathbb{C}^d$ is holomorphic, then   $F(E)$  is strongly  $L_a^2$-removable.}

 A simple version of  Remmert's  Proper Mapping theorem
 (\cite[p. 65]{Ch} or \cite{Re1,Re2})
  says
that if $f:\Omega_1\to \Omega_2$ is a holomorphic proper map  and $\mathcal{Z}$ is a
subvariety of $\Omega_1$, then $f(\mathcal{Z})$ is a subvariety of $ \Omega_2.$
 Since a subvariety can be locally represented as the intersection of finitely many (locally-defined) zero varieties,
    $f(\mathcal{Z})$ is $L_a^2$-removable in $\Omega_2$.

    \end{rem}

     We also have a corollary of Theorem \ref{removst}.
\begin{cor} \label{removst3} Suppose that  $E $ is a zero variety of a domain $\Omega$ in $\mathbb{C}^d$, and
$F $ is a holomorphic proper map  on $ \Omega $. Then    $
F^{-1}(F(E))$  is  relatively closed and $L_a^2$-removable in $ \Omega $.
\end{cor}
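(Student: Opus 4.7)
The plan is to reduce Corollary \ref{removst3} to the machinery already developed for Lemma \ref{inverses} and Theorem \ref{removst}, using Remmert's Proper Mapping theorem as the key additional input. Since $E = \{z\in \Omega: f(z)=0\}$ is a subvariety of $\Omega$ and $F$ is a holomorphic proper map, Remmert's theorem (quoted in the paragraph preceding the corollary) says $F(E)$ is a subvariety of $F(\Omega)$. In particular $F(E)$ is relatively closed in $F(\Omega)$ and has Lebesgue measure zero, so by continuity of $F$ the set $F^{-1}(F(E))$ is relatively closed in $\Omega$, giving the first assertion.

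For the $L_a^2$-removability, let $h\in L_a^2(\Omega - F^{-1}(F(E)))$; I would show that $h$ extends analytically to all of $\Omega$. First, I would verify that $F^{-1}(F(E))$ itself has Lebesgue measure zero by the inclusion
$$F^{-1}(F(E)) \subseteq Z(JF)\ \cup\ \{z\in \Omega: JF(z)\neq 0,\, F(z)\in F(E)\},$$
where the first set is a zero variety (note that $F$ being proper forces $F(\Omega)$ to be open by Theorem \ref{prop}, hence $JF \not\equiv 0$) and the second is a countable union of biholomorphic images of subsets of $F(E)$, which is null.

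Next, at each point $\lambda\in \Omega$ with $JF(\lambda)\neq 0$, pick a neighborhood $U(\lambda)$ on which $F$ is biholomorphic onto an open $V\subseteq F(\Omega)$. Then
$$F^{-1}(F(E))\cap U(\lambda) = (F|_{U(\lambda)})^{-1}(F(E)\cap V),$$
and $F(E)\cap V$ is a subvariety of $V$, so locally contained in a zero variety. By Theorem \ref{removst} and the comments following it, $F(E)\cap V$ is locally $L_a^2$-removable in $V$, and transporting through the local biholomorphism $F|_{U(\lambda)}$ shows that $F^{-1}(F(E))\cap U(\lambda)$ is $L_a^2$-removable in $U(\lambda)$. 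Hence $h$ extends holomorphically across $F^{-1}(F(E))$ on $U(\lambda)$. Taking the union over all such $\lambda$, $h$ extends to $\Omega - Z(JF)$. Finally Theorem \ref{removst} applied to the zero variety $Z(JF)$ yields the desired extension to all of $\Omega$.

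The main obstacle is bookkeeping rather than any new idea: Lemma \ref{inverses} is stated for $F$ holomorphic on $\overline{\Omega}$ and for a locally $L_a^2$-removable subset of $\mathbb{C}^d$, whereas here $F$ is only holomorphic on $\Omega$ and $F(E)$ lives naturally in $F(\Omega)$. What makes the argument still work is that Remmert's theorem supplies the subvariety structure of $F(E)$ in $F(\Omega)$ directly, bypassing the need to extend $F$ to the closure or to analyze the image by hand as in the proof of Theorem \ref{removst2}.
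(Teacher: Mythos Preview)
Your argument is correct and shares the paper's key input, Remmert's Proper Mapping theorem, to get that $F(E)$ is a subvariety of $F(\Omega)$. The only difference is in how you then pass from $F(E)$ back to $F^{-1}(F(E))$. You follow the template of Lemma \ref{inverses}: split $\Omega$ into the regular locus $\{JF\neq 0\}$, where local biholomorphisms transport removability, and the zero variety $Z(JF)$, handled separately by Theorem \ref{removst}. The paper is more direct: since $F(E)$ is a subvariety it is locally contained in a zero variety, and the preimage under a holomorphic map of a set locally contained in a zero variety is again locally contained in a zero variety (just compose the local defining function with $F$), so $F^{-1}(F(E))$ itself is locally contained in a zero variety and Theorem \ref{removst} applies immediately. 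Your route is a bit longer but has the virtue of reusing Lemma \ref{inverses} verbatim; the paper's shortcut avoids the $JF$ dichotomy altogether.
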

\begin{proof}
Suppose that  $E $ is a zero variety of a domain $\Omega$, and
$F $ is a holomorphic proper map  on $ \Omega $.
 Note that by  Remmert's Proper Mapping theorem  $F(E)$ is a subvariety of $F(\Omega)$. { Thus $F(E)$ is relatively closed and is a {local} zero variety,
and so is   $  F^{-1}(F(E))$. By  Theorem \ref{removst}, $
F^{-1}(F(E))$ is strongly
 $L_a^2$-removable in $\Omega$.  } Thus  each function in
 $L_a^2(\Omega\setminus\! F^{-1}(F(E)))$ extends holomorphically to $\Omega.$
  Noting that the Lebesgue measure of $  F^{-1}(F(E))$ equals zero,  we conclude that
  $  F^{-1}(F(E))$ is  $L_a^2$-removable to complete the proof. \end{proof}

It is known that if  $E $ is a zero variety of a domain $\Omega$,
then $\Omega\setminus\!E$ is connected  \cite[Chapter 14]{Ru2}. We need the following result.

\begin{prop} \label{conn} Suppose that  $E $ is a {local} zero variety of a domain $\Omega$ in $\mathbb{C}^d$. Then we have the following:
\begin{itemize}
\item[$(i)$] if $E$ is good and $F: \Omega\to \mathbb{C}^d$ is holomorphic on $\overline{\Omega}$ such
that the image of $F$ has an interior point, then  $\Omega \setminus\! F^{-1}(F(\overline{E}))$ is connected.
\item [$(ii)$]if  $F$ is a holomorphic proper map on $\Omega,$ then  $\Omega \setminus\! F^{-1}(F(E))$ is connected.
\end{itemize}
\end{prop}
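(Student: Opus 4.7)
The plan is to use the $L_a^2$-removability results already established (Theorem \ref{removst2} for part (i) and Corollary \ref{removst3} for part (ii)) together with the identity theorem on the connected domain $\Omega$. Write $A = F^{-1}(F(\overline{E}))$ in case (i) and $A = F^{-1}(F(E))$ in case (ii). In either case $A$ is relatively closed in $\Omega$ and has Lebesgue measure zero (being contained, locally, in a zero variety or contained in such by the $L_a^2$-removability itself), so $\Omega - A$ is open and dense in $\Omega$.

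Suppose, for contradiction, that $\Omega - A$ is disconnected. Then we can write $\Omega - A = U_1 \sqcup U_2$ for two nonempty disjoint open sets $U_1, U_2$. Define
$$h(z) = \begin{cases} 1, & z \in U_1, \\ 0, & z \in U_2. \end{cases}$$
Being locally constant on the open set $\Omega - A$, the function $h$ is holomorphic there; since $\Omega$ is bounded and $h$ is bounded, $h \in L_a^2(\Omega - A)$. In case (i) the hypotheses of Theorem \ref{removst2} are exactly those assumed on $E$ and $F$, and in case (ii) those of Corollary \ref{removst3} hold. In either case $A$ is $L_a^2$-removable in $\Omega$, so $h$ extends to a holomorphic function $\widetilde{h} \in L_a^2(\Omega)$ that agrees with $h$ on $\Omega - A$. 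But $\widetilde{h}$ vanishes on the nonempty open set $U_2$ while $\Omega$ is connected, so by the identity theorem $\widetilde{h} \equiv 0$ on $\Omega$, contradicting $\widetilde{h} \equiv 1$ on $U_1$.

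The main obstacle is simply confirming the applicability of the removability theorems. For (i) this is direct since the assumptions on $E$ and $F$ are imported verbatim from Theorem \ref{removst2}. For (ii), the point is that a holomorphic proper map automatically sends the (sub)variety $E$ to a subvariety of $F(\Omega)$ by Remmert's theorem, which is exactly what Corollary \ref{removst3} uses; hence the same two-valued-function argument applies without modification.
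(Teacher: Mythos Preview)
Your argument is correct and takes a genuinely different route from the paper's. The paper proves connectedness by a Hausdorff-measure argument: it shows (using the structure of $F(\overline{E})$ established in the proof of Theorem~\ref{removst2}, or Remmert's theorem in case~(ii)) that the exceptional set $A$ can be written as a countable union of compact sets each with finite $(2d-2)$-dimensional Hausdorff measure, and then invokes \cite[Theorem 14.4.5]{Ru2}, which guarantees that removing such a set from a connected open subset of $\mathbb{R}^{2d}$ leaves it connected. Your approach bypasses the Hausdorff machinery entirely: you feed the already-proved $L_a^2$-removability back into a two-valued-function contradiction via the identity theorem. This is shorter and more self-contained within the paper's own results, though it is specific to the holomorphic setting (the paper's route would prove connectedness for more general maps, since it is purely metric). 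There is no circularity, since Theorem~\ref{removst2} and Corollary~\ref{removst3} are established before Proposition~\ref{conn} and do not rely on it.
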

 \begin{proof} To prove (i) we suppose that $E $ is a good {local}  zero variety of a domain $\Omega$, and
 $F: \Omega\to \mathbb{C}^d$ is a  holomorphic map such
that the image of $F$ has an interior point. First we note that
$$F^{-1}(F(\overline{E})) \subseteq Z(JF)\bigcup\{z\in \Omega: JF(z)\neq 0, F(z)\in F(\overline{E})\}.$$

If $JF(\lambda)\neq 0,$   there is an open neighborhood
$U(\lambda)$ of $ \lambda $ such that $F$ is biholomorphic on
$U(\lambda)$. This gives that $ \{z\in \Omega: JF(z)\neq 0, F(z)\in
F(\overline{E})\} $ is contained in  a union of countably many sets $(F|_{U(\lambda) })^{-1}\big(F(\overline{E})\big).$
The proof of Theorem \ref{removst2} gives that $F(\overline{E})$  is contained in a union of finitely
many sets whose Hausdorf dimensions are at most $2d-2$. Hence the Hausdorff measure $h_{2d-2}(F(\overline{E}))$ is finite. Thus $ \{z\in \Omega: JF(z)\neq 0, F(z)\in
F(\overline{E})\} $ is contained in  a union of countably many sets whose Hausdofff dimensions are at most $2d-2$.
 As
\cite[Theorem 14.4.9]{Ru2} states that a zero
variety of a domain in  $\mathbb{C}^d$ can be represented as a union
of countably compact sets  $L_n$ whose Hausdorff measures
$h_{2d-2}(L_n)<\infty,$    we obtain that the Hausdorff dimension of $F^{-1}(F(\overline{E}))$ is at most $2d-2$.
    \cite[Theorem 14.4.5]{Ru2} states that for a connected domain $\mathcal{U}$ in $\mathbb{R}^{2d}$, if
   a relatively closed set  $G$ can be written as the union of countably many
   compact sets $K_n$ with the Hausdorff measure $h_t(K_n) <\infty$ for some $ t\in (0, 2d-1),$
   then $\mathcal{U}\setminus\! G$ is connected. Then we conclude that $\Omega\setminus\! F^{-1}(F(\overline{E}))$  is connected.

   To prove (ii),  by Corollary \ref{removst3} and its proof, we have that if  $F$ is a holomorphic proper map on $\Omega,$
   then $ F^{-1}(F(E))$ is relatively closed, and is  a {local} zero variety.
   Then by the same argument above, we conclude that
$\Omega \setminus\! F^{-1}(F(E))$ is connected.
\end{proof}
 %Assume conversely
%$JF$ is identically zero. Then     $F(\Omega)$  is
%at most of complex dimension $d-1$, which leads to a contradiction.
%Therefore, $JF$ is not
%identically zero.   %Implicit Function Theorem
%%Given
%%
%%F_1(x,y,z,u,v,w)    =   0
%%(1)
%%F_2(x,y,z,u,v,w)    =   0
%%(2)
%%F_3(x,y,z,u,v,w)    =   0,
%%(3)
%%if the determinantof the Jacobian
%%
%% |JF(u,v,w)|=|(partial(F_1,F_2,F_3))/(partial(u,v,w))|!=0,
%%(4)
%%then u, v, and w can be solved for in terms of x, y, and z and partial derivatives of u, v, w with respect to  x, y, and z can be found by differentiating implicitly.

Proposition \ref{conn} (i) may fail if the image of $F$ does not have any  interior points.
For example, let  $\Omega_0=\{z\in\mathbb{D}: |Re \, z|
<\frac{1}{2}\}$,
 $E=\{z \in \Omega_0\times \mathbb{D}: z_1=z_2\}$ and $$F(z_1,z_2)=(z_2,z_2).$$ Letting $\Omega=\Omega_0\times \mathbb{D},$
 we have  that $\Omega \setminus\!F^{-1}(F(\overline{E})) =   \Omega_0\times ( \mathbb{D} \setminus\!\Omega_0 )$
 is not connected.

\section{\label{sect4} Proof of Theorem \ref{thm11}}
 In this section we will present the proof of   Theorem   \ref{thm11}.
  We also need the  notion of representing local inverse, which is of great importance in the proof of Theorem \ref{thm11}. If for some operator $T$ in $\mathcal{V}^*(\Phi ,\Omega)$, $T$ has the following representation:
\begin{equation*}
  Th(w) = \sum_{j=1}^N c_jh(\rho_j(w))(J\rho_j)(w) ,\, h\in L_a^2(\Omega), \, w \in \Delta,
  \end{equation*} on an open domain $\Delta$ where $c_j $ are constants and
  all $\rho_j$ are local inverses of $\Phi$ on $\Delta$, then
$\rho_k$ is called \emph{a representing local inverse}
 \emph{for} $\mathcal{V}^*(\Phi ,\Omega)$ on $\Delta$ provided that $c_k$ does not equal $0$.
%Later in Section \ref{sect5} we will introduce the notion of a
%representing local inverse  for $\{M_\Phi  \}'$, which  makes
%significant difference.
The representing local inverse is first introduced in \cite{T1}
on the unit disk $\mathbb{D}$.

From now on,   let  $\Phi$   denote $(\phi _1,\cdots,\phi _d)$, where each $\phi
_j(1\leq j\leq d)$ is a holomorphic function on $\Omega$. Let $Z$
denote the set of zeros of the determinant of the Jacobian  $J\Phi $ of
$\Phi $.
\vskip2mm
    We are ready to present the proof of Theorem \ref{thm11}.
\vskip2mm  \noindent \textbf{Proof of Theorem \ref{thm11}.}
  Suppose  that the interior of $\overline{\Omega}$ equals
   $\Omega$ and
   $\Phi  :\Omega\to \C^d$ is  holomorphic  on a neighborhood of $\overline{\Omega}$  such that
 the image of  $\Phi  $ has an interior point. First we show that
  $\mathcal{V}^*(\Phi ,\Omega)$ is of finite dimension.
  Since the image of  $\Phi  $ has an interior point,
 the complex dimension of $\Phi (\Omega)$ equals
$d$. This gives that  $J \Phi $ does not identically vanish.  Then
$\Phi (\overline{Z})$ is a closed set with the Lebesgue measure zero.
As $\Phi$ may map some points in $\Omega$ to some points in $\partial \Omega$, for
each $\lambda \in  \Omega\setminus\!\Phi ^{-1}(\Phi  (\overline{Z}))$,
  $\Phi^{-1}(\Phi(\lambda))\cap \partial\Omega $ may not be empty.
Write
  $\Phi^{-1}(\Phi(\lambda))\cap \overline{\Omega}=\{\lambda_1,\cdots,\lambda_N\}$. Some of $\lambda_j$ may be in $\partial \Omega$.
We will use the property of $\Omega$ that
the interior of $\overline{\Omega}$ equals $\Omega$ to show that for some neighborhood of $\lambda$, the images of an open subset of  the neighborhood under local inverses are contained either in $\Omega$  or in $C^d\setminus \overline{\Omega }.$

 For each point $\lambda_j$, since the Jacobian $J\Phi$ does not
  vanish at $\lambda_j$, there exists a neighborhood of $\lambda_j$
  which is mapped biholomorphically to a neighborhood of $\Phi(\lambda_j)=\Phi(\lambda ).$
Therefore there exist  a disk  $\Delta$ containing $\lambda$,   $N$ domains
 $\Delta_1,\cdots,\Delta_N$ and biholomorphic maps $\rho_1,\cdots, \rho_N$ such that
\begin{equation}\label{213} \bigsqcup_{j=1}^N \Delta_j =\Phi ^{-1}(\Phi (\Delta)) ,\end{equation}
  $$\rho_j(\Delta_1)=\Delta_j  \quad \mathrm{and} \quad \Phi \circ
\rho_j=\Phi ,\, 1\leq j \leq N, $$ where $\rho_1(z)\equiv z$,
$\Delta_1=\Delta.$ % and $\lambda\in \Delta$.
Note  some $\Delta_i$ may have nonempty intersection with $\partial \Omega.$
  In fact, $\Phi^{-1}(\Phi(\lambda))\cap \partial \Omega $ may not be empty. If for some $i$,  $\rho_i(\lambda)$ is in $\partial \Omega $,
  then $\rho_i(\Delta_1)\cap \partial \Omega\neq \emptyset$.
  We will obtain some formula  like
 $$
  Sg(w) = \sum_{j }  c_jg(\rho_j(w))(J\rho_j)(w) ,\, w \in \Delta ,
$$  which   $c_i=0$ if  $\rho_i(\lambda)\in  \partial \Omega $ and $g\in L_a^2(\Omega)$. Here the disk $\Delta$ may shrink. % rangers over analytic polynomials.

   We will  derive the above formula and show $c_2=0$ if $\rho_2(\lambda)\in  \partial \Omega$.
Since  the interior of  $\overline{\Omega}$ equals $\Omega$, by Remark 1.4(2)
 there is a domain $U$ contained in $\Delta_2 (\Delta_2 =\rho_2(\Delta)) $ such that $U $ has no intersection with $\Omega.$
 Set $ \widetilde{\Delta} =\rho_2^{-1}(U) \subseteq \Delta,$ and we have $\lambda\not\in \widetilde{\Delta}$.
By shrinking $\widetilde{\Delta}$ finitely many times (we still keep the notation $\widetilde{\Delta}$),  one has that
for each $j$ either $\rho_j(\widetilde{\Delta}) \subseteq \Omega $ or
$\rho_j(\widetilde{\Delta}) \cap \overline{\Omega} =\emptyset. $ By (\ref{213}) we immediately get
$$\bigsqcup_{j=1}^N \widetilde{\Delta_j} =\Phi ^{-1}(\Phi (\widetilde{\Delta}))$$
Let $\Lambda= \{j:1\leq j\leq N, \rho_j(\widetilde{\Delta}) \subseteq \Omega \}$. Thus
\begin{equation}  \bigsqcup_{j\in \Lambda}  \widetilde{\Delta_j} =\Phi ^{-1}(\Phi (\widetilde{\Delta}))\cap \Omega .\label{212}\end{equation}

We will give the representation of those operators in the von
Neumann algebra $\mathcal{V}^*(\Phi ,\Omega)$  and
 the idea comes from \cite{GH1}. To do this, let $S$ be a unitary operator in $\mathcal{V}^*(\Phi ,\Omega)$. Then
  $S$ commutes with every operators in $\{ M_{\phi_j}  :1\leq j \leq d \}$. For any functions $g$ and $h$ in $L_a^2(\Omega) $, let
$$\widetilde{g}=Sg \quad  \mathrm{and} \quad  \widetilde{h}=Sh.$$
Thus  for  any  polynomial $P$ of $z_1, \cdots, z_d$, letting
$$P(\Phi(z))=P(\phi_1(z), \cdots \phi_d(z))$$
we have
$$M_{P(\Phi )}S=SM_{P(\Phi )},$$
so for  any two polynomials $P$ and $Q$ of $z_1, \cdots, z_d$,
\begin{eqnarray*}\langle P(\Phi )\widetilde{g},Q(\Phi )\widetilde{h}\rangle
&=&\langle M_{P(\Phi)} Sg, M_{Q(\Phi)}Sh\rangle   \\
&=& \langle SM_{P(\Phi)}g, SM_{Q(\Phi)}h\rangle  \\
&=&\langle P(\Phi )g,Q(\Phi )h \rangle .
\end{eqnarray*}
 This implies
\begin{equation}
\int_{\Omega}\Big( (P\overline{Q})\circ \Phi (w)g(w)\overline{ h}(w)
-(P\overline{Q})\circ \Phi (w)
\widetilde{g(w)}\overline{\widetilde{h(w)}}\Big) dV(w)=0.
\label{2.1}
\end{equation}
 Now let
$$\mathcal{X}=span\,\{p\overline{q}\,:\ p,q \ \mathrm{are \ polynomials\ in}\  d \ \mathrm{ variables} \}.$$
By the Stone-Weierstrass Theorem,  each continuous function on
$\overline{\Phi (\Omega)}$ $ $ can be uniformly approximated by
members in $\mathcal{X} $. Thus   $(\ref{2.1})$ gives
\begin{equation}
\int_{\Omega}\Big(  u(\Phi (w))g(w)\overline{ h}(w) -u(\Phi (w))
\widetilde{g(w)}\overline{\widetilde{h(w)}} \Big) dV(w)=0, u\in
C(\overline{\Phi (\Omega)}) .      \label{2.2}
\end{equation}
 By Lebesgue's  Dominated Convergence Theorem,
 (\ref{2.2})  holds for all  $u$ in $ L^\infty(\Phi (\Omega)).$ Thus
for each $u$ in $ L^\infty(\Phi (\widetilde{\Delta})),$  (\ref{2.2})  gives that
$$\int_{\Omega} \chi_{\Phi (\widetilde{\Delta})}(\Phi (w))  u(\Phi (w))g(w)\overline{ h}(w) dV(w)
 =\int_{\Omega} \chi_{\Phi (\widetilde{\Delta})}(\Phi (w)) u(\Phi (w))\widetilde{g(w)}\overline{\widetilde{h(w)} } dV(w) , $$
 and hence
$$\int_{\Phi ^{-1}(\Phi (\widetilde{\Delta}) )\cap \Omega}  u(\Phi (w))g(w)\overline{ h}(w) dV(w)
 =\int_{\Phi ^{-1}(\Phi (\widetilde{\Delta}))\cap \Omega} u(\Phi (w))\widetilde{g(w)}\overline{\widetilde{h(w)} } dV(w) . $$
 By (\ref{212}) we get $$\int_{\bigsqcup_{j\in \Lambda}  \widetilde{\Delta_j}  }  u(\Phi (w))g(w)\overline{ h}(w) dV(w)
 =\int_{\bigsqcup_{j\in \Lambda}  \widetilde{\Delta_j} } u(\Phi (w))g(w) \overline{\widetilde{h(w)} } dV(w) . $$
 and then % hence by   requirements on $\Delta$ that we stated after (\ref{21}),
  making changes of variables gives
$$\int_{\widetilde{\Delta}}  u(\Phi (z)) \sum_{j\in \Lambda}  (g \overline{h})\circ \rho_j(z)|(J\rho_j)(z)|^2dV(z)
=\int_{\widetilde{\Delta}}u(\Phi (z))\sum_{j\in \Lambda}
(\widetilde{g}\overline{\widetilde{h}})\circ
 \rho_j (z)|(J\rho_j)(z)|^2 dV(z).$$
  Noting that $\Phi $ is injective on $\widetilde{\Delta}$ and $u$ can be an arbitrary function in $L^\infty(\Phi (\Delta))$,
  we immediately have
 \begin{equation}
  \sum_{j\in \Lambda}  (g \overline{h})\circ \rho_j(z)|(J\rho_j)(z)|^2
=  \sum_{j\in \Lambda}  (\widetilde{g}\overline{\widetilde{h}})\circ
 \rho_j (z)|(J\rho_j)(z)|^2 ,\    \
z \in \Delta. \label{2.3}
\end{equation}
In fact, first  (\ref{2.3}) holds almost everywhere on $\Delta$,
and then by continuity (\ref{2.3}) holds on $\Delta$.
  Let $\mathcal{H}$ be the Bergman space
over $\Delta$.  Let
$$e_{g}^j=g(\rho_j(z))(J\rho_j)(z) \quad \mathrm{and} \quad f_g^j=\widetilde{g}(\rho_j(z))(J\rho_j)(z),
1\leq j\leq N, \, g \in L_a^2(\Omega) .$$ These functions $e_{g}^j$ and $f_g^j$
are in $\mathcal{H}.$ By $(\ref{2.3})$,
$\sum_{k\in \Lambda}  e_g^k \otimes e_h^k$ and $\sum_{k\in \Lambda}  f_g^k \otimes
f_h^k$ have the same Berezin transform. Since the Berezin transform is injective, we have
$$\sum_{k\in \Lambda}  e_g^k \otimes e_h^k=\sum_{k\in \Lambda}  f_g^k \otimes f_h^k
,\  g,h\in  L_a^2(\Omega).$$ Let $K=\sharp \Lambda $.   By  Lemma 3.5 in  \cite{GH1}
  there is an $K\times K$ unitary numerical matrix $W$ such that
  $$W \Big(g(\rho_k(w))J\rho_k (w)\Big)_{k\in \Lambda}=
\Big( \widetilde{g}(\rho_k(w))J\rho_k (w)\Big)_{k\in \Lambda},\,
  w  \in  \Delta,$$
%\[
%W\left(\begin{array}{ccc}
%g(\rho_1(w))J\rho_1 (w) \\
%\vdots \\
%g(\rho_N(w))J\rho_N (w)
%\end{array}\right)= \left(\begin{array}{ccc}
%\widetilde{g}(\rho_1(w))J\rho_1 (w) \\
%\vdots \\
%\widetilde{g}(\rho_N(w))J\rho_N (w)
%\end{array}\right), \ w  \in  \Delta,
%\]
Note that $1\in \Lambda$.
By expanding the first raw of $W$, we have that there exist $N$ constants
$c_1,\cdots,c_N$ such that
$$\widetilde{g}(\rho_1(w))J\rho_1 (w) =\sum_{ j\in \Lambda} c_jg(\rho_j(w))(J\rho_j)(w)= \sum_{j=1}^N c_jg(\rho_j(w))(J\rho_j)(w) ,$$
where $c_j=0$ if $j\not\in \Lambda$,
thus to get
\begin{equation*}
  Sg(w) = \sum_{j=1}^N c_jg(\rho_j(w))(J\rho_j)(w) ,\, w \in \widetilde{\Delta} , g\in L_a^2(\Omega)
\end{equation*}
as $\rho_1(w)\equiv w.$  In particular, $c_2=0.$
Thus we get
 $$
  Sg(w) = \sum_{j\in \Lambda }  c_jg(\rho_j(w))(J\rho_j)(w) ,\, w \in \widetilde{\Delta} ,
$$
where $g$ ranges over analytic polynomials.
Since each $\rho_j$ is holomorphic on $\Delta (\Delta \subseteq \Omega),$ both sides of the above equality are holomorphic on $\Delta$. Thus  the above equality also holds
for each $w\in \Delta$.
As $\rho_2(\lambda ) \in \partial \Omega,$ we have that   $c_2=0$. Similarly we conclude that
$c_i=0$ if   $\rho_i(\lambda ) \in \partial \Omega $. That is,
for each $j  $ such that $c_j\neq 0,$ $\rho_j(\lambda ) \in \Omega$. Then
 \begin{equation*}
  Sg(w) = \sum_{j\in \Lambda}  c_jg(\rho_j(w))(J\rho_j)(w) ,\, w \in \Delta ,
  \end{equation*} holds for each $g\in L_a^2(\Omega).$

  In summary,    we write
  \begin{equation} \label{2.4}
  Sg(w) = \sum_{j=1}^N c_jg(\rho_j(w))(J\rho_j)(w) ,\, w \in \Delta , g\in L_a^2(\Omega)
\end{equation}
where it is assumed the symbols $j$ are   rearranged such that for such $j$ satisfying $c_j\neq 0$, one has $\rho_j(w)\in \Omega$ for each $w\in \Delta.$

 Since each operator in a von Neumann algebra is a linear combination of
four unitary operators in the von Neumann algebra  (\cite[Proposition 13.3]{Con} , \cite[Theorem 10.6]{Zhub}) each operator  $S$   in
$\mathcal{V}^*(\Phi ,\Omega)$ has the same form as  (\ref{2.4}). Thus $S$ is completely determined by $(c_1,\cdots,c_N)$ in the formula (\ref{2.4}) on $\Delta.$  Noting
that    vectors $(c_1,\cdots,c_N)$  belong to a subspace of
$\mathbb{C}^N$, we have
$$\dim \mathcal{V}^*(\Phi ,\Omega)\leq N.$$

Next we will find   finitely many generators of
$\mathcal{V}^*(\Phi ,\Omega)$. To do so, recall  that a local inverse $\rho$ of $\Phi :\Omega\to \C^d$ is called  \emph{admissible}
  if for each curve $\gamma$ in $\Omega\setminus\! \overline{\Phi^{-1} (\Phi(Z)) } $, $\rho$ admits analytic
continuation with values in $\Omega.$  First we will  show that
if
  $ \rho $ is a representing local
inverse for $\mathcal{V}^*(\Phi , \Omega )$, then
  $\rho$ is admissible.
By Proposition \ref{conn} $\Omega
\setminus\!\Phi ^{-1}(\Phi  (\overline{Z}))$ is connected. Since $\Phi$ is
holomorphic on $\overline{\Omega}$,   $ \Phi^{-1}
(\Phi(\overline{Z}) )$ is relatively closed in $\Omega.$ Thus $$
\Omega\setminus\!\overline{\Phi^{-1} (\Phi(Z))  } =\Omega\setminus\!\Phi ^{-1}(\Phi
(\overline{Z})).$$

Let
  $ \rho $ be a representing local inverse   for $\mathcal{V}^*(\Phi ,\Omega)$.
   Letting $$A_0=\Phi ^{-1}(\Phi  (\overline{Z})),$$
we will see that $\rho$ is admissible with respect to $A_0$.  Proposition \ref{conn} gives that $\Omega\setminus\!A_0$ is
connected. Since  $ \rho $ is a representing local inverse   for
$\mathcal{V}^*(\Phi ,\Omega)$,
  there are an operator $S$ in $\mathcal{V}^*(\Phi ,\Omega)$ and
    an open ball $\Delta$  such that %where   ({2.4}) holds:
 \begin{equation}
  Sg(w) = \sum_{j=1}^N c_jg(\rho_j(w))(J\rho_j)(w) ,\, w \in \Delta, g\in L_a^2(\Omega), \label{48}
\end{equation}
where  $\rho_{j_0}=\rho $ for some integer $j_0$ and  $c_{j}\neq
0$. This can be reformulated as
$$S^*K_w  = \sum_{j=1}^N \overline{c_j}\overline{(J\rho_j)(w)}K_{\rho_j(w)},\, w \in \Delta,$$
where $K_w$ denotes the reproducing kernel of  $L_a^2(\Omega)$ at $w
\in \Omega$. For each point $\lambda \in \Omega\setminus\!A_0$, there is an
open ball $\Delta_\lambda$ containing $\lambda$ where  a similar
representation as (\ref{48})   holds for $S$ and hence we get
 \begin{equation}
S^*K_w  = \sum_{j=1}^{N_\lambda}
\overline{c_j^\lambda}\, \overline{(J\rho_j^\lambda)(
w)}K_{\rho_j^\lambda(w)}, \,w \in \Delta_\lambda.  \label{rep}
\end{equation}
 Let $\gamma$ be   an arbitrary curve in $  \Omega\setminus\!A_0$ and $\gamma(0)\in \Delta. $
Since the union of  all these  open balls $\Delta_\lambda$ covers
$\gamma,$ applying the Henie-Borel theorem shows that there exist
finitely many such balls  whose union covers $\gamma$. If the
intersection of  two such balls is not empty,
then by the uniqueness of the representation (\ref{rep}) %are  according to the linear independence  of reproducing kernels.
   we get direct  continuations of the tuple $\{\rho_j^\lambda :1\leq j \leq N\}$
and all $N_\lambda$ are equal. Then it follows that $\rho$ admits
analytic continuation along $\gamma$. Furthermore, $$
c_j^\lambda\neq 0, \, 1\leq j \leq N_\lambda,$$  as we showed
before,     for these $j$ the images
$\rho_j^\lambda(\Delta_\lambda)$   lie in
$\Omega.$ Thus the images of $\rho$ and its continuations lie in
$\Omega.$
 By arbitrariness of $\gamma $,  $\rho$ is admissible with
respect to $A_0$.  So a representing local inverse   for
$\mathcal{V}^*(\Phi ,\Omega)$ is admissible.

\vskip2mm

 For each point in  $\Omega\setminus\!A_0$,
there is a neighborhood where all members in $[\rho]$ are
holomorphic. As   $\Omega\setminus\!A_0$ is connected, if $\rho$ is a
representing local inverse for $\mathcal{V}^*(\Phi , \Omega )$, then
the number $k([\rho])$ of different members  in $[\rho]$
 (defined on a same domain $\Delta$)
does not depend on the choice of the domain   $\Delta$.  In this
sense, we denote this integer $k([\rho])$ by $\sharp [\rho]$, called
\emph{the multiplicity} of $[\rho]$ \cite{GH3}. Now fix a
representing local inverse $\rho$  of $\Phi  $. As done in
\cite{DSZ} or \cite{GH3}, define
\begin{equation}\label{def}   \mathcal{E}_{[\rho]}h(w)=  \sum_{ \sigma\in [\rho]}
h\circ \sigma (w) \,   J\sigma  (w),\,
 w\in \Omega\setminus\!A_0,
 \end{equation} where $h$ is an arbitrary function over
$\Omega\setminus\!A_0 $  or $\Omega.$ In the case of $\Phi $ being holomorphic
over $\overline{\Omega},$   the right hand side of (\ref{def}) is a
finite sum. Also by the above paragraph  $\sigma(z)\in \Omega\setminus\!A_0 $
if $z\in \Omega\setminus\!A_0$ and $\sigma\in [\rho]. $ Then the
formula (\ref{def}) makes sense. %As  will be discussed in detail later, %
 For a local inverse $\rho$ of $\Phi ,$  let $\rho^-$
  denote  the inverse of $\rho$.

\vskip2mm
  Next, we will see that if $\rho$
is admissible, then   both  $\mathcal{E}_{[\rho]} $ and
$\mathcal{E}_{[\rho^-]} $  are in
 $\mathcal{V}^*(\Phi ,\Omega)$, and
$\mathcal{E}_{[\rho]}^*=\mathcal{E}_{[\rho^-]}$. Also,  one   will
see that if   $\rho$ is admissible,
   $\rho $ is  representing   for $\mathcal{V}^*(\Phi ,\Omega)$.
   In fact,   the proof of the theorem in
 \cite[p. 526]{T1} shows that the class of all admissible local inverses of $\Phi $
  is closed under composition;    if  $\rho $ is an  admissible
   local inverse, then its inverse   $\rho^-$ is also admissible.
Suppose that $\rho$ is an admissible local inverse of $\Phi  $ with
respect to $A_0$, defined as above.  %
 By the proof of \cite[Lemma 6.3]{GH3},
$ \mathcal{E}_{[\rho]}$ maps each function in $L_a^2(\Omega)$ to a
function in  $L_a^2(\Omega\setminus\!A_0)$; and furthermore, there
exists a constant $C$  such that
$$\| \mathcal{E}_{[\rho]} g\| \leq C \|g\| ,\, g\in L_a^2(\Omega).$$
Theorem \ref{removst2} says that $A_0$ is $L_a^2$-removable in $\Omega$, and
thus   $$L_a^2(\Omega\setminus\!A_0)= L_a^2(\Omega ).$$ So $
\mathcal{E}_{[\rho]}$  defines a bounded operator on $L_a^2(\Omega )$. Again by the proof of \cite[Lemma 6.3]{GH3},
we get
   $$\mathcal{E}_{[\rho]}^*=\mathcal{E}_{[\rho^-]}.
$$   Since both $\mathcal{E}_{[\rho]} $  and $\mathcal{E}_{[\rho^-]}
$ commute with $M_{\Phi}=\{M_{\phi_j }:1\leq j \leq
d\}$, they are in $\mathcal{V}^*(\Phi ,\Omega)$. This shows that
  $\rho$ is a representing local inverse  for $\mathcal{V}^*(\Phi ,\Omega)$.
Thus, all admissible local inverses are representing for
$\mathcal{V}^*(\Phi ,\Omega)$.

\vskip2mm
 Finally, we will  derive a delicate form of  (\ref{2.4}).
If $S$ is in $\mathcal{V}^*(\Phi ,\Omega)$,   it has the form as
(\ref{2.4})
 \begin{equation*}
  Sg(w) = \sum_{j=1}^N c_jg(\rho_j(w))(J\rho_j)(w) ,\, w \in \Delta,\, g\in L_a^2(\Omega),
\end{equation*}
where $\Delta$ is a subdomain of $\Omega$.     By applying
techniques of analytic continuation,
 if $\rho_k$ and $\rho_l$ lie in the same equivalent class, then their coefficients are equal \cite{DSZ}; that is,
 $c_k=c_l.$ To do this, let $\rho_l$ be the analytic continuation of $\rho_k$
 along a loop $\gamma,$ and for each $\rho_j$,   let $\widetilde{\rho_j}$ denote the
 analytic continuation of $\rho_j$
 along   $\gamma.$
 Then we get
 \begin{equation*}
  \sum_{j=1}^N c_jg(\rho_j(w))(J\rho_j)(w) =
  \sum_{j=1}^N c_jg(\widetilde{\rho_j}(w))(J\widetilde{\rho_j})(w)  ,\, w \in \Delta,\, g\in L_a^2(\Omega).
\end{equation*} By the uniqueness of coefficients $c_j$   and noting
$\widetilde{\rho_k}=\rho_l$  we have $c_k=c_l $, as desired. By
arbitrariness of $\Delta$, we can rewrite $S$ as
 \begin{equation}
  Sg(w) = \sum_{\rho}  c_\rho \mathcal{E}_{[\rho]} g  (w) ,\, w \in \Omega\setminus\!A_0 ,\, g\in L_a^2(\Omega). \label{2.5}
 \end{equation}
  Hence  each  operator $S$ in $\mathcal{V}^*(\Phi ,\Omega)$  can be
  represented as a linear span of $\mathcal{E}_{[\rho]} $,
  where $\rho$ are representing local inverses  for $\mathcal{V}^*(\Phi
  ,\Omega)$. Also, we have shown that
   those $\rho$ are exactly admissible
 local inverses  of $\Phi ,$ and each
  $\mathcal{E}_{[\rho]} $ is a well-defined bounded operator in  $\mathcal{V}^*(\Phi ,\Omega)$ to complete the proof of  Theorem \ref{thm11}.
 $\hfill \square$
\vskip2mm

Recall that for a local inverse $\rho$ of $\Phi ,$  $\rho^-$ denotes
the inverse of $\rho$. The proof of  Theorem \ref{thm11} gives the
following result.
\begin{prop}Suppose both $\Omega$ and  $\Phi $ satisfy the
assumptions in Theorem \ref{thm11}.  Then $ \rho $ is a representing
local inverse   for $\mathcal{V}^*(\Phi , \Omega )$ if and only  if
$\rho$ is admissible. In this case, both     \label{63}
$\mathcal{E}_{[\rho]} $  and $\mathcal{E}_{[\rho^-]} $  are in
 $\mathcal{V}^*(\Phi ,\Omega)$, and
$\mathcal{E}_{[\rho]}^*=\mathcal{E}_{[\rho^-]}$.
\end{prop}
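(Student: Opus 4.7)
The plan is to read both directions and the adjoint identity off the work already done in the proof of Theorem \ref{thm11}. For the forward implication (representing $\Rightarrow$ admissible), I would take a representing local inverse $\rho$ appearing in a formula
$$Sg(w)=\sum_{j=1}^N c_j g(\rho_j(w))(J\rho_j)(w),\quad w\in\Delta,\ g\in L_a^2(\Omega),$$
for some $S\in\mathcal{V}^*(\Phi,\Omega)$ with $\rho=\rho_{j_0}$ and all $c_j\neq 0$, and rewrite it in the reproducing-kernel form
$$S^*K_w=\sum_{j=1}^N \overline{c_j}\,\overline{(J\rho_j)(w)}\,K_{\rho_j(w)}.$$
Around each $\lambda\in\Omega-A_0$ (with $A_0=\Phi^{-1}(\Phi(\overline{Z}))$) an analogous kernel expansion for $S^*K_w$ holds by the same argument that produced the first one. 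Covering an arbitrary curve $\gamma\subset\Omega-A_0$ by finitely many such balls via Heine--Borel, and using uniqueness of the reproducing-kernel expansion on overlaps, glues the local tuples into direct analytic continuations along $\gamma$, producing a continuation of $\rho$ whose values stay inside $\Omega$; the connectedness of $\Omega-A_0$ from Proposition \ref{conn} is what makes this global patching possible.

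For the converse (admissible $\Rightarrow$ representing), I would build $\mathcal{E}_{[\rho]}$ by hand and exhibit it in $\mathcal{V}^*(\Phi,\Omega)$. The bulk of the work is boundedness: pick a measurable partition $\{E_k\}$ of $\Omega-A_0$ on which every $\sigma\in[\rho]$ is holomorphic, apply the pointwise estimate
$$\int_{E_k}|\mathcal{E}_{[\rho]}h|^2\,dA\leq C\sum_{\sigma\in[\rho]}\int_{\sigma(E_k)}|h|^2\,dA,$$
and exploit that each point of $\Omega-A_0$ is covered by the family $\{\sigma(E_k)\}$ at most $\sharp[\rho^-]$ times, along the lines of \cite[Lemma 6.3]{GH3}. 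This gives $\mathcal{E}_{[\rho]}\colon L_a^2(\Omega)\to L_a^2(\Omega-A_0)$ boundedly, and Theorem \ref{removst2} applied to $A_0=\Phi^{-1}(\Phi(\overline{Z}))$ promotes the output back to $L_a^2(\Omega)$, yielding a bounded operator on the Bergman space that by construction commutes with every $M_{\phi_j,\Omega}$.

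The adjoint identity $\mathcal{E}_{[\rho]}^*=\mathcal{E}_{[\rho^-]}$ I would obtain by a change-of-variables computation: test $\langle\mathcal{E}_{[\rho]}h,g\rangle$ first for $h\in C_c(\Omega-A_0)$ supported in a single preimage $\rho_{i_0}(U)$, decompose $[\rho^-]\rho_{i_0}(U)=\bigsqcup_{j=1}^{l'}\check{\rho_j}(U)$, and push each piece back through the admissible $\sigma_j\colon\check{\rho_j}(U)\to\rho_{i_0}(U)$ so the sum on the left collapses to $\langle h,\mathcal{E}_{[\rho^-]}g\rangle$ (using that the inverse of an admissible local inverse is admissible, as in \cite[p.~526]{T1}). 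Extension to arbitrary $h,g\in L_a^2(\Omega)$ uses an exhaustion $K_n\uparrow\Omega-A_0$ with smooth cutoffs $u_n$, coupled with dominated convergence against the $L^1$ majorant $|g|\,\mathcal{E}_{|[\rho]|}|h|$. Once this is in hand, $\mathcal{E}_{[\rho^-]}=\mathcal{E}_{[\rho]}^*$ is also in $\mathcal{V}^*(\Phi,\Omega)$, and the term $h\mapsto h\circ\rho\cdot J\rho$ inside $\mathcal{E}_{[\rho]}$ witnesses $\rho$ as a representing local inverse.

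The main obstacle will be the forward implication: ensuring that every coefficient $c_j^\lambda$ in the local kernel expansion at $\lambda$ is nonzero (so that uniqueness really dictates the patching on overlaps), and that the analytic continuations produced along $\gamma$ never escape $\Omega$. Both hinges on the geometric hypotheses (1), (2) on $\Omega$ and on the connectedness supplied by Proposition \ref{conn}; the boundedness and adjoint computations, while technical, are routine applications of the admissibility framework together with the removability theorem.
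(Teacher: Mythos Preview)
Your proposal is correct and follows essentially the same approach as the paper: the proposition is stated as an immediate consequence of the proof of Theorem~\ref{thm11}, and each of your three steps (reproducing-kernel patching for representing $\Rightarrow$ admissible, boundedness of $\mathcal{E}_{[\rho]}$ via a measurable partition plus Theorem~\ref{removst2} for the converse, and the change-of-variables/cutoff argument for $\mathcal{E}_{[\rho]}^*=\mathcal{E}_{[\rho^-]}$) matches the paper's argument line by line, including the references to \cite[Lemma~6.3]{GH3} and \cite[p.~526]{T1}.
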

  \noindent In particular,  $\rho$ is admissible  if and only
if $\rho^-$ is admissible.
\vskip1mm

We immediately have the following corollaries.

\begin{cor} Suppose both  $\Omega$ and  $\Phi $ satisfy the assumptions in Theorem \ref{thm11}.
 Then the dimension of $\mathcal{V}^*(\Phi ,\Omega)$ equals the number of
   equivalent classes of admissible local inverses of $\Phi $ on $\Omega $ .
    \end{cor}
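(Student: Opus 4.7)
The plan is to show that the spanning set produced in Theorem \ref{thm11} is in fact a basis, so the dimension equals the number of equivalence classes of admissible local inverses.

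By Theorem \ref{thm11} together with Proposition \ref{63}, every $S\in\mathcal{V}^*(\Phi,\Omega)$ can be written in the form
\[
S=\sum_{[\rho]} c_{[\rho]}\,\mathcal{E}_{[\rho]},
\]
where $[\rho]$ runs over the equivalence classes of admissible local inverses of $\Phi$, and each $\mathcal{E}_{[\rho]}$ belongs to $\mathcal{V}^*(\Phi,\Omega)$. So the operators $\{\mathcal{E}_{[\rho]}\}_{[\rho]}$ span $\mathcal{V}^*(\Phi,\Omega)$; what remains is to establish linear independence.

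Assume $\sum_{[\rho]} c_{[\rho]}\mathcal{E}_{[\rho]}=0$. I would fix a small ball $\Delta\subseteq \Omega-A_0$ on which $\Phi$ is univalent and on which every equivalence class of admissible local inverses has a full set of analytic branches; such a $\Delta$ exists by the choice made at the beginning of the proof of Theorem \ref{thm11}. On $\Delta$ the assumed identity becomes, for every $h\in L_a^2(\Omega)$,
\[
\sum_{[\rho]}\sum_{\sigma\in[\rho]} c_{[\rho]}\, h(\sigma(w))\,J\sigma(w)=0,\qquad w\in \Delta.
\]
Because distinct equivalence classes yield disjoint families of branches on $\Delta$, the double sum is really a single sum $\sum_\sigma c_{[\sigma]}\, h(\sigma(w))\,J\sigma(w)=0$ indexed by the finite collection of admissible branches $\sigma$ on $\Delta$, with all branches pairwise distinct as holomorphic maps.

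The main obstacle is exactly this step: extracting the individual coefficients $c_{[\sigma]}$ from such a sum. I would handle it by testing against reproducing kernels of $L_a^2(\Omega)$: setting $h=K_z$ for $z\in \Omega$ gives
\[
\sum_\sigma c_{[\sigma]}\, \overline{K_w(\sigma^{-}(z))}\,\text{(Jacobian factors)}=0
\]
(or, more directly, pick a point $w_0\in\Delta$ whose images $\sigma(w_0)$ are pairwise distinct and choose $h$ peaking near one preferred point $\sigma_0(w_0)$ while being small near the others). Since $J\sigma(w_0)\neq 0$ for every admissible branch (each $\sigma$ is biholomorphic on $\Delta$) and since $L_a^2(\Omega)$ separates points of $\Omega$, this forces $c_{[\sigma]}=0$ for each admissible branch $\sigma$ on $\Delta$, hence $c_{[\rho]}=0$ for every equivalence class. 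This yields the required linear independence, and combined with the spanning statement completes the proof.
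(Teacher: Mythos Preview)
Your proposal is correct and essentially matches what the paper does. The paper states this corollary as an immediate consequence of Theorem \ref{thm11} without a separate proof; the linear independence of the $\mathcal{E}_{[\rho]}$ is implicit in the ``uniqueness of the representations (\ref{rep})'' and ``uniqueness of coefficients $c_j$'' invoked in the proof of Theorem \ref{thm11}, which rests precisely on the fact that reproducing kernels at the distinct points $\rho_j(w)$ are linearly independent --- the same mechanism you use.

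One small remark: your reproducing-kernel line is slightly garbled (with $h=K_z$ one gets $\sum_\sigma c_{[\sigma]}\,K_z(\sigma(w))\,J\sigma(w)=0$, not an expression involving $\sigma^{-}(z)$), but your ``more direct'' alternative --- choosing $h\in L_a^2(\Omega)$ with $h(\sigma_0(w_0))=1$ and $h(\sigma(w_0))=0$ for $\sigma\neq\sigma_0$, which is possible since polynomials lie in $L_a^2(\Omega)$ and separate the finitely many distinct points $\sigma(w_0)\in\Omega$ --- is clean and suffices.
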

 \begin{cor} Let $\Omega$ be a domain  in Theorem \ref{thm11}.
For  $n$ $(n\geq d)$  holomorphic functions  $\phi _1,\cdots, \phi
_n$ on $\overline{\Omega}$, if there are $d$ members $\phi
_{i_1},\cdots,\phi _{i_d}$ among them such that
 $$J(  \phi _{i_1},\cdots,\phi _{i_d}  )  \not\equiv 0$$
    on $\Omega$, then
  $\mathcal{V}^*(\phi _1,\cdots, \phi _n,\Omega) $ is of finite dimension, and  $\mathcal{V}^*(\phi _1,\cdots, \phi _n,\Omega) $
   is generated by $\mathcal{E}_{[\rho]}$, where $\rho$ are admissible local inverse of
   $\phi _1,\cdots, \phi _n$. \end{cor}
 On the other hand, Theorem \ref{thm11} may fail if the assumption on
 $\Phi$  is not satisfied.
  \begin{exam}
   Let $p(z_1,z_2)=z_1z_2$ and $\Omega=\mathbb{B}_2$.
  Then  $\dim \mathcal{V}^*(p,\mathbb{B}_2 )=\infty$
as $M_p$ has infinitely many pairwise orthogonal reducing
\label{interior} subspaces:
$$\overline{span\, \{p^kz_1^n: k=0,1,\cdots\}}, \, n\in \mathbb{Z}_+.$$
Let $\Phi =(p^2, p^3)$. Thus the image $\Phi(\Omega) $ of $\Phi $
is contained in
$$\{(z^2,z^3): z\in \mathbb{D}\},$$
and  thus $\Phi(\Omega) $ has no interior point.  Since
$$\mathcal{V}^*(p,\mathbb{B}_2 )\subseteq \mathcal{V}^*(\Phi
,\mathbb{B}_2 ),$$
 $\dim \mathcal{V}^*(\Phi ,\mathbb{B}_2 )=\infty.$
 \end{exam}

 However, for a single polynomial $q$ it may happen that  %\linebreak
$\dim \mathcal{V}^*(q,\Omega ) <\infty $ and even
$\mathcal{V}^*(q,\Omega )=\C I $.
 For instance, there exist abundant polynomails $q$ of degree one such that
$ \mathcal{V}^*(q, \mathbb{D}^d )=\C I $ for $d\geq 1$ \cite[Theorem 5.1]{WDH}.  Also, it is shown that for positive integers $k$ and
$l$,
 $2\leq \dim \mathcal{V}^*(z^k+w^l, \mathbb{D}^d)<\infty$  \cite[Theorem
1.1]{DH}.

  \section{\label{sectfif}Proofs of Theorems  \ref{mainproper} and \ref{41}}

In this section  we will present the proofs of Theorems  \ref{mainproper}
and \ref{41} and assume that $\Omega$ is  a bounded domain in $\mathbb{C}^d$.
\vskip2mm
%  First we present the proof of Theorem \ref{41}, whose idea differs essentially from that in
%   \cite{Gh}

%We first give the proof of Theorem \ref{mainproper}.

 \noindent \textbf{Proof of Theorem \ref{mainproper}.}
 First we show that each local inverse $\sigma$ of $\Phi $ is
admissible  in $\Omega.$
 To do this,  let  $$\mathcal{E}=\Phi ^{-1}(\Phi  (Z)).$$
 By Corollary \ref{removst3}, $\mathcal{E}$ is relatively closed  and      $L_a^2$-removable in $ \Omega $.
   We will show that
  for each curve $\gamma \subseteq \Omega\setminus\!\mathcal{E}$, $\sigma$ admits analytic continuation with values in $\Omega.$
In fact,   by Proposition \ref{conn}  $\Omega\setminus\!\mathcal{E}$ is
connected. Given a curve $\gamma$ in $\Omega\setminus\!\mathcal{E}$,  Theorem
\ref{variety} shows that for each point $\lambda$
 on $\gamma$  there exists an enough small ball $B_\lambda$ centered at $\lambda$ such that
\begin{equation}\label{add02}
\Phi ^{-1}(\Phi (B_\lambda))=\bigsqcup_{j=1}^n U_j(\lambda),
\end{equation}
where $U_1(\lambda)=B_\lambda,$ and  $\{U_j(\lambda)\}_{j=1}^{n} $ are disjoint domains on which
$\Phi $ is biholomorphic. % |_{U_j(\lambda)}
This integer $n$ only depends on $\Phi.$
  Then it is easy to define $n$ local inverses of $\Phi :$
 $$\rho_1^\lambda, \rho_2^{\lambda}, \cdots, \rho_n^{\lambda},$$
 which map $B_\lambda$ bijectively  to $U_1(\lambda),\cdots,U_n(\lambda)$, respectively.
 Since $\gamma$ is compact, there are finitely may balls $\{B_{\lambda_k}\}_{k=1}^{N}$ whose union covers $\gamma.$
 After reordering them, we may require that
 $$B_{\lambda_k}\cap B_{\lambda_{k+1}}   \neq \emptyset,\, k=1,\cdots, N-1.$$
 Clearly, those local inverses $\{ \rho_j^{\lambda_k}\}_{  j=1}^{n} $ on $B_{\lambda_k}$
 are direct   continuations of $\{ \rho_j^{\lambda_{k+1}}\}_{j=1}^{n}$ defined on $B_{\lambda_{k+1}}$,
  up to a permutation. Thus each local inverse admits analytic continuation along $\gamma $ with values in $\Omega,$ as  desired.
So by arbitrariness of $\gamma $ all
local inverses of $\Phi $ are admissible.

   We  claim that   each  operator $S$ in $\mathcal{V}^*(\Phi ,\Omega)$  can be
 represented as a linear span of $\mathcal{E}_{[\rho]} $,
 where $\rho$ are local inverses of $\Phi$.
 For this,  recall that
$$ \mathcal{E}_{[\rho]}h(w)=  \sum_{ \sigma\in [\rho]} h\circ \sigma (w)
 J\sigma (w), w\in \Omega\setminus\! \mathcal{E},  \, h\in L_a^2(\Omega),$$
 and  we have just shown that
 for each local inverse $\rho,$  $\rho$ is admissible.
  Using Corollary \ref{removst3} and   the discussions of the paragraph below (\ref{def}),
    we have that both $\mathcal{E}_{[\rho]}$ and $\mathcal{E}_{[\rho^-]}$
 are   bounded operators in $\mathcal{V}^*(\Phi ,\Omega) $.
 Besides, for a fixed point $\lambda\in \Omega\setminus \mathcal{E}$ with
  $\mathcal{E}=\Phi^{-1}(\Phi(Z)),$ we have (\ref{add02})
$$\Phi ^{-1}(\Phi (B_\lambda))=\bigsqcup_{j=1}^n U_j(\lambda).$$
This, along with the discussions below (\ref{212}), enable us
 to obtain a similar formula as (\ref{2.4}) for each operator $S$ in $ \mathcal{V}^*(\Phi ,\Omega) $,
$$
  Sg(w) = \sum_{j=1}^N c_jg(\rho_j(w))(J\rho_j)(w) ,\, w \in \Delta , g\in L_a^2(\Omega),
$$
where $\Delta$ is a neighborhood of $\lambda.$
By the technique of analytic continuation, we get
that if $\rho_i$ and $\rho_k$ lie in the same equivalence $[\rho_i]$, then $c_i=c_k.$
This means that $S$ can be represented as a linear span of    $ \mathcal{E}_{[\rho]}$, where $\rho$ are local inverses of $\Phi$ as desired.

 %\textbf{this paragraph be replaced by ** }
% \textbf{**}
% Our idea is to use the proof of Theorem \ref{thm11}. There are two points to figure out.
% First, since $\Phi$ is a proper holomorphic map from
% $\Omega$ to $\Phi (\Omega)$, for each $\lambda$ in $\Omega$, $\Phi^{-1}(\Phi (\lambda))$ is contained in $\Omega$ and hence for each local inverse $\rho$ of $\Phi$, $\rho (\lambda )$ must lie in $\Omega$.
% This enables us to drop  the condition on  the domain $\Omega$ that the interior points of the closure of $\Omega$ equals $\Omega$.
% Second,

 %Then \textbf{using the same proof} as the proof of Theorem \ref{thm11}...words,  we will have that   each  operator $S$ in $\mathcal{V}^*(\Phi ,\Omega)$  can be
% represented as a linear span of $\mathcal{E}_{[\rho]} $,
% where $\rho$ are local inverses of $\Phi$.

Recall
 $$\mathcal{S}_{\Phi }= \{(z,w)\in \Omega^2:  \Phi (z)=\Phi (w),\, z\not\in \Phi ^{-1}(\Phi  ({Z}))\}.$$
  To complete the proof of  Theorem \ref{mainproper}, we need  show that
the number of    equivalent classes of   local inverses of $\Phi $
equals the number of components of $\mathcal{S}_{\Phi }$. To do
this,  first we have two observations. On one hand,   each point $(z,w)$
in  $\mathcal{S}_{\Phi }$ has the form $(z,\rho(z))$ for some local
inverse $\rho$ of $\Phi ,$ and there is a neighborhood $\mathcal{O}$
of  $(z,\rho(z))$ such that each point in $\mathcal{O}\cap
\mathcal{S}_{\Phi }$ has the form $(\lambda,\rho(\lambda))$. On the other hand,
for two local inverses $\rho $ and $\sigma$ of $\Phi ,$ they are
equivalent if and only if there is a curve $\gamma$ along which
$\rho$ admits analytic continuation
  $\sigma$. The curve $\gamma$ gives a curve    in  $\mathcal{S}_{\Phi }$ to joint $(z,\rho(z))$  and $(w,\sigma(w))$.
Thus if $\rho $ is equivalent to $\sigma$, then
  their images lie in the same component of $\mathcal{S}_{\Phi }$.  By  a similar discussion
  the converse is also true. So
   the number of    equivalent classes of   local inverses of $\Phi $  equals the number of
components of $\mathcal{S}_{\Phi }$. This completes the proof.
 $\hfill \square$  %Proof of Theorem 1.1

\vskip2mm
In general, if $\Phi:\Omega \to \mathbb{C}^d $ is holomorphic
on a neighborhood of $\overline{\Omega}$, it is likely that not
all local inverses of $\Phi$ are admissible.
Most of them appear to be ``local" rather than ``global"
 (the precise term is ``admissible"). However,
all local inverses of a holomorphic proper map are  global, as shown in
 the proof of Theorem \ref{mainproper}. Some words are in order.

 \begin{rem}
 \begin{itemize}
	\item[(1)] Suppose $\Phi$ is a holomorphic proper map from
 $\Omega$ to $\Phi (\Omega)$. Then for each curve $\gamma$ in $\Omega$, $\Phi^{-1}(\Phi (\gamma))$
  is contained in $\Omega$ and hence for each local inverse $\rho$ of $\Phi$,
 its analytic continuation along $\gamma$ must lie in $\Omega$ (see Theorem \ref{thm11}).
 This enables us to drop  the condition on  the domain $\Omega$ that the interior points of the closure of $\Omega$ equals $\Omega$.
	\item [(2)] In the proof of Theorem \ref{thm11}, we assume that
   $\Phi$ is
holomorphic on $\overline{\Omega}$ and in that case    $ \Phi^{-1}
(\Phi(\overline{Z}) )$ is relatively closed in $\Omega.$ Thus
 \begin{equation}\label{add01}
\Omega\setminus\!\overline{\Phi^{-1} (\Phi(Z))  } =\Omega\setminus\!\Phi ^{-1}(\Phi
(\overline{Z})).\end{equation}
Later, we do analytic continuation of local inverses of $\Phi$
in $\Omega\setminus\!\overline{\Phi^{-1} (\Phi(Z))  }$.
In the situation where  $\Phi$ is a holomorphic proper map on $\Omega$,  we need verify a similar statement
as (\ref{add01}),
  $$ \Omega\setminus\!\overline{\Phi^{-1} (\Phi(Z)) }=\Omega\setminus\! \Phi^{-1} (\Phi(Z))   . $$For this, just note that by Corollary \ref{removst3}  $\Phi^{-1} (\Phi(Z))$ is relatively closed in  $\Omega$, which forces the above equality to hold.
\item [(3)] The proof of Corollary \ref{removst3} gives that for a holomorphic proper map $\Phi$
over $\Omega$ and a zero variety $E$ of $\Omega$, $\Phi^{-1} (\Phi(E)) $ is locally contained in a zero variety.
 Thus, for  a holomorphic proper map $\Phi$ on $\Omega$ and a local inverse $\rho$ of $\Phi$,
  there is a relatively closed
 subset $A$ of $\Omega $ such that $A$ is locally contained in a zero variety
and for each curve $\gamma$ in $\Omega\setminus\!A$, $\rho$ admits analytic
continuation with values in $\Omega.$
	 \end{itemize}
\end{rem}
We proceed to give the proof of Theorem \ref{41}.
 \vskip2mm
 \noindent \textbf{Proof of Theorem \ref{41}.} If $\Phi $ is    biholomorphic,   the only local inverse of $\Phi $
  is the identity map and hence  by the proof of  Theorem \ref{thm11},
    (\ref{213}) will become $\Delta=\Phi^{-1}(\Phi(\Delta)).$ The resulting formula (\ref{2.4})
 $$
  Sg(w) = \sum_{j=1}^N c_jg(\rho_j(w))(J\rho_j)(w) ,\, w \in \Delta , g\in L_a^2(\Omega)
$$  contains exactly one term; that is, each operator $S$ in $ \mathcal{V}^*(\Phi ,\Omega) $
is a constant tuple of the identity. Thus
  $ \mathcal{V}^*(\Phi ,\Omega) $ is trivial.

 Conversely
we will show that if $\Phi  :\Omega\to \Omega'  $ is a    non-biholomorphic proper map then
   $ \mathcal{V}^*(\Phi ,\Omega ) $ is nontrivial.
 Since $\Phi $  is not biholomorphic,
  Theorem \ref{variety} gives that   there exists a nontrivial
 local inverse $\rho$ of $\Phi $. Recall that
$$ \mathcal{E}_{[\rho]}h(w)=  \sum_{ \sigma\in [\rho]} h\circ \sigma (w)
  J\sigma (w), w\in \Omega\setminus\! \mathcal{E}, \, h\in L_a^2(\Omega).$$
  Using Corollary \ref{removst3} and   the discussions of the paragraph below (\ref{def}),  we have that both $\mathcal{E}_{[\rho]}$ and $\mathcal{E}_{[\rho^-]}$
 are   bounded operators in $\mathcal{V}^*(\Phi ,\Omega) $.
  Since $\rho$ is not the identity map,  $\mathcal{E}_{[\rho]}$ is not
  a scalar multiple of the identity operator.
   This implies that   $ \mathcal{V}^*(\Phi ,\Omega) $ is nontrivial
    to complete the proof of Theorem \ref{41}.
 $\hfill \square$  %Proof of Theorem 1.2.

  \section{\label{six} Some examples  of   $\mathcal{V}^*(\Phi , \Omega)$ }

\vskip2mm

 In  this section,   we  will show   some examples of $\mathcal{V}^*(\Phi , \Omega)$.
   In \cite{DPW},
   Douglas, Putinar and Wang showed     that $\mathcal{V}^*(B, \mathbb{D}) $ is abelian if $B $ is a finite
Blaschke product.
 Hence by  Thomson's commutant theorem on multiplication operators \cite{T1}, if $\phi $ is   holomorphic  on the
closed unit disk $\overline{\mathbb{D}}$ and $\phi $ is not
constant, then there is a finite Blaschke product $B$ such that
 $\mathcal{V}^*(\phi , \mathbb{D})=\mathcal{V}^*(B, \mathbb{D})$ is always abelian.
 But Example \ref{exam6.1} shows that $\mathcal{V}^*(\Phi ,\Omega)$ may be not abelian  in multi-variable case and Examples \ref{exam6.3} and \ref{exam6.4} show that  $\mathcal{V}^*(\Phi ,\Omega)$ may be  abelian and nontrivial in multi-variable case also.

  \vskip2mm

  For each map $\Phi :\Omega\to \C^d$,
  the deck transformation group  $G(\Phi )$ of $\Phi $ consists of all  holomorphic automorphism $\rho$
  of $\Omega$ satisfying $\Phi  \circ \rho =\Phi .$
  If all local inverses of a holomorphic map $\Phi :\Omega\to \C^d$ lie in $G(\Phi )$,
  then $\Phi $ is called a regular map \cite{Mi}.   % \textbf{the palace}
  One also say that the  proper holomorphic map $\Phi$ is factored
  by automorphisms \cite{BeD,DSe}
  if  $\Phi$ is regular and the deck transformation group  $G(\Phi )$ is finite.
  For more information of such groups, we
  call the reader's attention to \cite{BeD,DSe}.
  An extensive study was made in \cite{BDGS},  and
  especially in Sections 5 and 6 there, for the joint reducing subspaces of multiplication operators
  defined by a class of proper holomorphic maps, whose deck transformation groups are generated by
  so called pseudoreflections on $\mathbb{C}^n$.

  % A pseudorelfection on $\mathbb{C}^d$ is a linear mapping
  %$\rho: \mathbb{C}^d\to  \mathbb{C}^d $  such that  $\rho$ is of finite order in $GL(n, C)$ and  rank $(I_n-\rho)\,=\,1.$
  % For example,
  It is known that
  a finite Blaschke product is regular if and only if it is of the form
  $$m_1\circ \varphi \circ m_2,$$
  where both $m_1$ and $m_2$ are in  Aut($\mathbb{D}$), % holomorphic automorphisms of %$\mathbb{D}$,
  and $\varphi(z)=z^n$ for some positive integer $n$ \cite{GH3}. However, in multi-variable case we
  will see more examples induced by  polynomials or those arising from finite reflection groups\cite{BeD}.
  Given a discrete group $\Gamma$,  recall that the group von
  Neumann algebra   $\mathcal{L}(\Gamma)$
  is the weak closure of the linear span of all
  left regular representations $\{L_\rho: \rho\in \Gamma \}$
  of $\Gamma$ on $l^2(\Gamma)$, defined by
  $$L_\rho g(\sigma)=g(\rho^{-1} \sigma), \sigma \in \Gamma, \, g\in l^2(\Gamma).$$% \textbf{ regular map.}
  Theorem \ref{mainproper} has an immediate corollary.
  \begin{cor}  \label{corproper} Suppose $\Phi   $ is
  	a holomorphic  regular proper map   on $ \Omega $.  Then $\mathcal{V}^*(\Phi ,\Omega)$ is
  	$*$-isomorphic to $\mathcal{L}(G(\Phi ))$, where $G(\Phi )$ is the deck transformation group
  	of $\Phi .$
  \end{cor}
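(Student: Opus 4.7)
The plan is to derive the isomorphism directly from Theorem \ref{mainproper} by using regularity to collapse each equivalence class of local inverses to a single deck transformation. Since $\Phi$ is proper, Theorem \ref{variety} gives an integer $m$ with $\sharp(w)\leq m$ for all $w$, so $G(\Phi)$ is finite with $|G(\Phi)|\leq m$. Since $\Phi$ is regular, every local inverse is the restriction of some $\rho\in G(\Phi)$. I would first check that for $\rho\in G(\Phi)$ the equivalence class $[\rho]$ reduces to $\{\rho\}$: the analytic continuation of the restriction $\rho|_\Delta$ along any curve in $\Omega$ is again the globally defined $\rho$, and if two deck transformations $\rho_1,\rho_2$ were equivalent then by direct continuation they would coincide on some open set, so by the identity theorem $\rho_1=\rho_2$. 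Consequently the operator $\mathcal{E}_{[\rho]}$ of \eqref{def} becomes the single weighted composition operator
\[
U_\rho h(w)=h(\rho(w))\,J\rho(w),\qquad h\in L_a^2(\Omega),\ \rho\in G(\Phi).
\]

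Next I would record the algebraic identities that govern $\{U_\rho\}$. A direct application of the chain rule gives
\[
U_\rho U_\sigma = U_{\sigma\circ\rho},\qquad \rho,\sigma\in G(\Phi),
\]
so $\rho\mapsto U_\rho$ is an \emph{anti}-homomorphism of groups. Combining this with Proposition \ref{63} (applied in the setting of Theorem \ref{mainproper}, where each local inverse of a proper map is admissible by the proof of Theorem \ref{41}) yields $U_\rho^{*}=U_{\rho^{-1}}$. By Theorem \ref{mainproper} the family $\{U_\rho:\rho\in G(\Phi)\}$ spans $\mathcal{V}^*(\Phi,\Omega)$, and its cardinality equals both the number of equivalence classes of local inverses and $\dim\mathcal{V}^*(\Phi,\Omega)$; hence this family is a basis.

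Finally I would construct the $*$-isomorphism. Since $G(\Phi)$ is finite, $\mathcal{L}(G(\Phi))$ coincides with the linear span of $\{L_\rho:\rho\in G(\Phi)\}$, which is a basis of dimension $|G(\Phi)|$. Define
\[
\pi:\mathcal{L}(G(\Phi))\longrightarrow \mathcal{V}^*(\Phi,\Omega),\qquad \pi(L_\rho)=U_{\rho^{-1}},
\]
and extend linearly. The inversion in the definition of $\pi$ is forced: it converts the multiplicativity of the left regular representation ($L_\rho L_\sigma=L_{\rho\sigma}$) into the anti-multiplicativity of $U$. Indeed,
\[
\pi(L_\rho)\pi(L_\sigma)=U_{\rho^{-1}}U_{\sigma^{-1}}=U_{\sigma^{-1}\circ\rho^{-1}}=U_{(\rho\sigma)^{-1}}=\pi(L_{\rho\sigma}),
\]
and the $*$-preservation follows from $L_\rho^{*}=L_{\rho^{-1}}$ combined with $U_\rho^{*}=U_{\rho^{-1}}$. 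Since $\pi$ sends a basis to a basis, it is bijective, hence a $*$-isomorphism.

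There is no serious obstacle here; the content of the corollary lies almost entirely in Theorem \ref{mainproper}. The one point that requires a moment of care is verifying that $[\rho]=\{\rho\}$ in the regular case, and the notational subtlety of inserting $\rho\mapsto\rho^{-1}$ in the definition of $\pi$ to reconcile the mismatched variance of the left regular representation and the weighted composition operators on $L_a^2(\Omega)$.
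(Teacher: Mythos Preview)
Your proof is correct and follows essentially the same approach as the paper: both use Theorem \ref{mainproper} to reduce $\mathcal{V}^*(\Phi,\Omega)$ to the span of the operators $\mathcal{E}_{[\rho]}=U_\rho$ indexed by $G(\Phi)$, then build the $*$-isomorphism with $\mathcal{L}(G(\Phi))$. The only cosmetic difference is how the anti-multiplicativity $U_\rho U_\sigma=U_{\sigma\circ\rho}$ is reconciled with the left regular representation: you insert an inversion and send $L_\rho\mapsto U_{\rho^{-1}}$, while the paper passes to adjoints and sends $\mathcal{E}_\rho^{*}\mapsto L_\rho$ (using $\mathcal{E}_\rho^{*}\mathcal{E}_\sigma^{*}=\mathcal{E}_{\rho\circ\sigma}^{*}$); since $U_{\rho^{-1}}=U_\rho^{*}$ these are the same map.
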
                  % doned
  \begin{proof} Suppose $\Phi   $ is
  	a holomorphic  regular proper map   on $ \Omega $.
  	Then each  local inverse of $\Phi $ corresponds to a member in  $G(\Phi )$.
  	Let  $$G(\Phi )=\{\rho_j: 1\leq j \leq
  	n\}.$$
  	For each  local inverse $\rho $  of
  	$\Phi $, $\rho$ is in $ G(\Phi )$, and thus
  	$\mathcal{E}_{[\rho]}$ defines a unitary operator in
  	$\mathcal{V}^*(\Phi ,\Omega)$.
  	Letting $\mathcal{E}_\rho  $ denote  $\mathcal{E}_{[\rho]}$,  Theorem \ref{mainproper} gives that $\mathcal{V}^*(\Phi ,\Omega)$
  	is generated by   $\mathcal{E}_{\rho}$ for $\rho$ in $ G(\Phi ).$
  	Noting $$\mathcal{E}_\rho^*\mathcal{E}_\sigma^*= \mathcal{E}_{\rho\circ \sigma}^*,\, \rho,\sigma\in G(\Phi ) ,$$
  	we have that the map given by
  	$$\sum_{j=1}^n c_j \mathcal{E}_{\rho_j}^* \mapsto \sum_{j=1}^n c_j L_{\rho_j}, $$
  	is  a $*$-isomorphism from  $\mathcal{V}^*(\Phi ,\Omega)$ to  $\mathcal{L}(G(\Phi ) ).$
  \end{proof}
  Note that the assumption of Corollary \ref{corproper} can be
  reformulated as:  $\Phi   $ is  a
  holomorphic proper map and each local inverse of $\Phi $ is holomorphic in $ \Omega
  $.
  To see this, suppose $\rho$ is a  local inverse of $\Phi $,
  then so is its inverse $\rho^-.$
  Since both $\rho\circ \rho^-$ and  $\rho^-\circ \rho $ locally are
  identity, and all local inverses of $\Phi $ are holomorphic in $ \Omega
  $, it follows that both $\rho$ and $\rho^-$ are holomorphic
  automorphisms of $ \Omega$, forcing $\rho\in G(\Phi )$. Thus  $\Phi   $ is  regular, as desired.

  \begin{exam}\label{exam6.1} Let $\Omega$ be $\mathbb{D}^2$ or $\mathbb{B}_2$. Let
$$p(z_1,z_2)=z_1^2+z_2^2 \quad \mathrm{and} \quad q(z_1,z_2)=z_1^2 z_2^2 ,$$
and $$\Phi  =(p,q) .$$
 There are exactly eight admissible local inverses of $ \Phi  $:
$\rho_1,\cdots, \rho_8$, which are defined by
$$\rho_1(z_1,z_2)=(z_1,z_2),  \,\rho_2(z_1,z_2)=(-z_1,z_2), \, \rho_3(z_1,z_2)=( z_1,-z_2),$$ \,
$$\rho_4(z_1,z_2)=( -z_1,-z_2); \rho_5(z_1,z_2)=(z_2,z_1) , \,\rho_6(z_1,z_2)=(-z_2,z_1),$$
and
$$     \rho_7(z_1,z_2)=(z_2,-z_1),\,
  \rho_8(z_1,z_2)=( -z_2,-z_1).$$
Each $\rho_j$   induces a unitary operator $U_j$ on  \label{nonabel}
$L_a^2(\Omega):$
$$U_jf=f\circ \rho_j,\,f\in L_a^2(\Omega).$$
By   Theorem \ref{thm11}, the von Neumann algebra
$\mathcal{V}^*(\Phi ,\Omega)$ is generated by $\{U_j: 1\leq j\leq 8\}.$
Noting that the deck transformation group $G(\Phi )$ equals $\{\rho_j:
1 \leq j\leq 8\},$
we have that $\mathcal{V}^*(\Phi ,\Omega)$ is not abelian since $G(\Phi )$ is not abelian.
 \end{exam}

    The following proposition
tells us that the map $\Phi $ in Example \ref{nonabel} is a holomorphic
proper map.
 \begin{prop}  Let $\Omega  $ be  $\mathbb{D}^2$ or $\mathbb{B}_2$, and let $\Phi(z_1,z_2) =(z_1^2+z_2^2, z_1^2z_2^2)$.
  Then $\Phi (\Omega)$ is open and $\Phi  $ is a  holomorphic proper map from    \label{proper1}
  $\Omega$ to $\Phi (\Omega)$. \end{prop}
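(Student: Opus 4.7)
The plan is to verify the two conditions appearing in the characterization of proper maps recalled in Section~\ref{sect2}: since $\Phi$ is a polynomial, it is holomorphic on all of $\mathbb{C}^2$ and in particular on $\overline{\Omega}$, so it suffices to check that (i) $\Phi(\Omega)$ is open and (ii) $\Phi(\partial\Omega)\subseteq \partial\Phi(\Omega)$.

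For (i), I would first observe that for any fixed $(p,q)\in\mathbb{C}^2$, solving $\Phi(w)=(p,q)$ reduces to solving the quadratic $t^2-pt+q=0$ for the unordered pair $\{w_1^2,w_2^2\}$ and then extracting square roots of each root. Thus $\Phi^{-1}(p,q)$ consists of at most eight points of $\mathbb{C}^2$, and in particular $\Phi^{-1}(p,q)\cap\Omega$ is a finite (hence compact) subset of $\Omega$ for every $(p,q)$. Applying Theorem~\ref{prop} to $\Phi|_\Omega$ then yields that $\Phi$ is an open map, whence $\Phi(\Omega)$ is open.

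For (ii), I would argue by contradiction: suppose $p^{*}\in\partial\Omega$ and $\Phi(p^{*})=\Phi(w)$ for some $w\in\Omega$. Writing $u=w_1^2$, $v=w_2^2$, $u^{*}=(p_1^{*})^2$, $v^{*}=(p_2^{*})^2$, the identity $\Phi(w)=\Phi(p^{*})$ reads $u+v=u^{*}+v^{*}$ and $uv=u^{*}v^{*}$, so $\{u,v\}=\{u^{*},v^{*}\}$ as unordered pairs; equivalently, $\{|w_1|^2,|w_2|^2\}=\{|p_1^{*}|^2,|p_2^{*}|^2\}$. When $\Omega=\mathbb{D}^2$, the hypothesis $p^{*}\in\partial\mathbb{D}^2$ gives $|p_i^{*}|=1$ for some $i$, which forces one of $|w_1|$, $|w_2|$ to equal $1$, contradicting $w\in\mathbb{D}^2$; when $\Omega=\mathbb{B}_2$, summing the two equal multisets yields $|w_1|^2+|w_2|^2=|p_1^{*}|^2+|p_2^{*}|^2=1$, contradicting $w\in\mathbb{B}_2$. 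Hence $\Phi(\partial\Omega)\cap\Phi(\Omega)=\emptyset$. By continuity $\Phi(\partial\Omega)\subseteq \overline{\Phi(\Omega)}$, and combining this with the openness of $\Phi(\Omega)$ from (i) gives $\Phi(\partial\Omega)\subseteq\partial\Phi(\Omega)$, so the characterization recalled in Section~\ref{sect2} finishes the proof. The main obstacle is the key observation in step (ii) that $\Phi(z)=\Phi(w)$ forces $\{|z_1|^2,|z_2|^2\}=\{|w_1|^2,|w_2|^2\}$, exploiting that $z_1^2+z_2^2$ and $z_1^2z_2^2$ are the elementary symmetric polynomials in $z_1^2,z_2^2$; once this symmetry is recognized, each case reduces to a direct boundary-versus-interior comparison.
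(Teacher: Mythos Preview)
Your proof is correct. The properness step (ii) is essentially the paper's argument: both of you observe that $\Phi(z)=\Phi(w)$ forces $\{z_1^2,z_2^2\}=\{w_1^2,w_2^2\}$ via the elementary symmetric polynomials, and then conclude that a boundary point cannot share a $\Phi$-value with an interior point. The paper packages this by saying that the fiber of $\Phi$ through any point is the orbit of the deck transformation group $G(\Phi)$ from Example~\ref{nonabel}, and that both $\Omega$ and $\partial\Omega$ are $G(\Phi)$-invariant; you unwind the same fact directly in terms of moduli for each of $\mathbb{D}^2$ and $\mathbb{B}_2$.

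The genuine difference is in step (i). The paper proves openness of $\Psi(z_1,z_2)=(z_1+z_2,z_1z_2)$ on $\mathbb{C}^2$ by a hands-on Rouch\'e argument (perturbing the coefficients of $x^2-s_1x+s_2$ perturbs the roots), and then composes with $(z_1,z_2)\mapsto(z_1^2,z_2^2)$. You instead note that every fiber of $\Phi$ in $\mathbb{C}^2$ is finite (at most eight points), hence $\Phi|_\Omega$ has compact fibers, and invoke Theorem~\ref{prop}. Your route is shorter and uses a result already quoted in the preliminaries; the paper's route is self-contained and has the side benefit of showing $\Psi$ is open on all of $\mathbb{C}^2$, a fact reused later (e.g.\ in Proposition~\ref{proper2}).
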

 \begin{proof} Let $\Psi(z_1,z_2)=(z_1 +z_2 , z_1 z_2 ) $. First we will   show that $\Psi$ is an open map in $\C^2$; that is,
 $\Psi$ maps  each open ball to  an open set. If so,
 noting that $\Phi (z_1 ,z_2)=\Psi  (z_1^2,z_2^2)$,
  $\Phi $ is also an open map. To show  that $\Psi$ is open,  we just need show that for a given  point $w=(w_1,w_2)$  and for any neighborhood $U$ of $w$, $\Psi(U)$ contains an open ball centered at $\Psi(w).$
   Let $\Psi(w)=(s_1,s_2)$.  Noting that $w_1$ and $w_2$ are zeros of the polynomial $v$ defined by
   $$v(x)=x^2-s_1x+s_2, $$
   we can find   two disks $D_1$ and $D_2$  centered at $w_1$ and $w_2$, respectively, such that
   $$D_1\times D_2 \subseteq U.$$
 If $w_1\neq w_2,$ we can require that $\overline{D_1} \cap
\overline{D_2} =\emptyset.$ By
  applications of Rouche's theorem on $\partial D_1$ and $\partial D_2$, respectively,   if $s'=(s_1',s_2')$ is enough close to  $\Psi(w) $,
 then the polynomial
 $u(x)=x^2-s_1'x+s_2'$ has exactly two zeros: one in $D_1$ and the other in $D_2.$   If $w_1= w_2,$
 then let $D_2=D_1$. By similar discussion, the polynomial
 $u(x)=x^2-s_1'x+s_2'$ has exactly two zeros in $D_1$. In either case,     $\Psi(D_1\times D_2)$   contains an open ball centered at $\Psi(w),$ and so does  $\Psi( U)$. Thus
 both $\Psi$ and $\Phi $ are open maps.

Next we show that $\Phi $ is a proper map. Noting
$\Phi $ is holomorphic on  $\om$,  we have that $\Phi $ is a proper map
 if and only if $\Phi (\partial \Omega) \subseteq \partial \Phi (  \Omega).$ Since $\Phi (\Omega)$ is open,   it suffices to show that
$\Phi (\partial \Omega) \cap \Phi (  \Omega)=\emptyset$. To do this,  fix
$\lambda=(\lambda_1,\lambda_2)$ and $\mu=(\mu_1,\mu_2)$. Let
$$x_1=\lambda_1^2+\lambda_2^2 \quad \mathrm{and}\quad x_2= \lambda_1^2 \lambda_2^2.$$
Thus $\lambda_1^2$ and $ \lambda_2^2$ are the solutions of the
equation
$$x^2-x_1x+x_2=0 \, (x\in \mathbb{C}).$$
So, $\Phi (\lambda)=\Phi (\mu)$ if and only if
$(\lambda_1^2,\lambda_2^2)$ is a permutation of $(\mu_1^2,\mu_2^2)$.
This is equivalent to that  there exists a member $\rho\in G(\Phi )$ (see Example \ref{nonabel})  satisfying
$$\mu=\rho(\lambda).$$
Since both  $\Omega$ and $\partial \Omega$ are invariant under the action of
$G(\Phi )$, if $\Phi (\lambda)=\Phi (\mu)$, we have that  $\lambda\in \partial
\Omega$ if and only if $\mu\in \partial \Omega$. This gives
$\Phi (\partial \Omega) \cap \Phi (  \Omega)=\emptyset$, to complete
the proof.
 \end{proof}

By the proof of Proposition \ref{proper1}, we deduce that if $\Omega $ is replaced
by any domain
 invariant under the deck transformation group
 $G(\Phi )$, then the same results hold. Similarly, one can prove that
 $(z_1+z_2,z_1^2+z_2^2)$  is a holomorphic
  proper map on either  $\mathbb{D}^2$ or $\mathbb{B}_2$.
  By the same idea, one can also show that for positive integers
$\alpha_1 , \cdots, \alpha_d $,
$(z_1^{\alpha_1},z_2^{\alpha_2},\cdots, z_d^{\alpha_d})$ defines
 a holomorphic proper map on $\mathbb{B}_d$. In these cases, a direct
 application of  Corollary \ref{corproper} shows that $\mathcal{V}^*(\Phi ,\Omega)$ is
 $*$-isomorphic to $\mathcal{L}(G(\Phi ))$.  %doned
In fact, Proposition \ref{proper1} induces more holomorphic proper maps as below.
\begin{exam}\label{exam6.3} Suppose $\Phi =(z_1 z_2^2,z_1  +z_2^2 )$. Since  $\Phi $
 is the composition of  \linebreak $ (z_1 z_2,z_1  +z_2)$ and $ (z_1 , z_2^2 )$, $\Phi $
 is a holomorphic proper map on  $\mathbb{D}^2$.
  By Theorem
\ref{mainproper}, studying the structure of $\mathcal{V}^*(\Phi ,
\mathbb{D}^2)$ reduces to
  determining all admissible local inverses of $\Phi $ on  $\mathbb{D}^2$.
  To do so, we will solve the following equation in $\mathbb{D}^2$:
$$\Phi (w)=\Phi (z).$$
Following the proof of Proposition \ref{proper1}, we see that  $\Phi (z)=\Phi (w) $
if and only if  one of the following holds:
 \[    \left\{\begin{array}{cc}
   w_1 =z_1, \\
     w_2^2=z_2^2,
 \end{array}\right.
 \]
or
 \[    \left\{\begin{array}{cc}
  w_1=z_2^2, \\
   w_2^2=z_1.
 \end{array}\right.
 \]
 By solving these
equations, we get   three equivalent classes of admissible local
inverses   of $\Phi $:   $\rho_1(z_1,z_2)=(z_1,z_2)$, $\rho_2(z_1,z_2)=(z_1,-z_2)$ and   $\{\rho_3,\rho_4\}$    with      $$\rho_3 (z_1,z_2)=(z_2^2,\sigma(z_1)), \quad
\mathrm{and}\quad \rho_4 (z_1,z_2)=(z_2^2,-\sigma(z_1)), $$ where
$\sigma$ denotes the branch of $\lambda \mapsto \sqrt{\lambda}$
defined on a neighborhood of $\frac{1}{2} $ such that
$\sigma( \frac{1}{2})=\sqrt{\frac{1}{2}}$. Note that
 for each curve $\gamma \subseteq \mathbb{D}\backslash \{0\}$ with $\gamma(0)= \frac{1}{2}$,
 $\sigma$ always admits an analytic continuation along $\gamma$ and $-\sigma$ lies in the same equivalence with
 $\sigma $; that is, $-\sigma\sim \sigma.$
 These local inverses naturally derives three operators in
$\mathcal{V}^*(\Phi ,   \mathbb{D}^2)$: $I,$ $S_1$ and $S_2$, defined by   $$S_1 f(z_1,z_2)= f(z_1,-z_2),\,
(z_1,z_2)\in \mathbb{D}^2,$$ and
$$S_2 f  =J\rho_3 \cdot f\circ \rho_3 + J\rho_4 \cdot f\circ \rho_4,$$ for $f\in L_a^2(\mathbb{D})$. Formally,
$$S_2 f(z_1,z_2)= \frac{z_2}{\sigma(z_1)}[- f(z_2^2,    \sigma(z_1))+  f(z_2^2, - \sigma(z_1))],
\, (z_1,z_2)\in (\mathbb{D}\setminus\!\{0\})\times  \mathbb{D}, $$
for   $f\in L_a^2(\mathbb{D}^2).$
We emphasize that $S_2f$ is locally defined first and then
it extends analytically to the whole bidisk $\mathbb{D}^2$.
 Thus
  $$\dim \mathcal{V}^*(\Phi ,   \mathbb{D}^2)=3.$$
 For a     finite dimensional von Neumann
 algebra $ \mathcal{A}$ on a Hilbert space $H$, $\mathcal{A}$ is $ *$-isomorphic to %$C^*$
$\bigoplus_{k=1}^r M_{n_k}(\mathbb{C}) $ \cite[Theorem III.1.2]{Da}.  Thus $\mathcal{V}^*(\Phi ,   \mathbb{D}^2)$
  is $*$-isomorphic to $\mathbb{C}\oplus \mathbb{C} \oplus \mathbb{C},$ and then
   $\mathcal{V}^*(\Phi ,   \mathbb{D}^2)$
  is abelian.

  We have to point out that $\Phi $ is not a proper map on $\mathbb{B}_d$.
   On the other hand,  $\Phi $ has exactly two  admissible local inverses on $\mathbb{B}_d$: $(z_1,z_2)$, $(z_1,-z_2)$.
   Then by    Theorem  \ref{thm11},  $ \dim \mathcal{V}^*(\Phi ,
   \mathbb{B}_2)=2$, and then
    $\mathcal{V}^*(\Phi ,    \mathbb{B}_2)$
  is $*$-isomorphic to $\mathbb{C}\oplus  \mathbb{C}.$
\end{exam}
 To contrast with Example \ref{exam6.3} we have the following   interesting example.
\begin{exam}\label{exam6.4} Suppose $\Phi =(z_1^2 z_2^4,z_1^2  +z_2^4 )$. Then $\Phi $ is a holomorphic
proper map on  $\mathbb{D}^2$ but not on
$\mathbb{B}_2.$ By the argument in  Example \ref{exam6.3} we get
exactly  twelve  equivalent classes of admissible local inverses  of
$\Phi $ on $\mathbb{D}^2$: $$(z_1,i^kz_2), ~~(-z_1,-i^kz_2), ~~(1\leq
k\leq 4);$$
 $$(z_2^2, \pm  \sigma(z_1)), (-z_2^2, \pm  \sigma(z_1));
   (z_2^2, \pm i \sigma(z_1)), (-z_2^2, \pm  i \sigma(z_1)),$$
   where $\sigma$ is the local inverse defined in Example \ref{exam6.3}.
  By a simple computation,  not all these equivalent classes commute with each other under composition. Theorem \ref{mainproper}  gives that  $\mathcal{V}^*(\Phi ,   \mathbb{D}^2)$
is not abelian.

But there are only eight admissible local inverses  of $\Phi $ on
$\mathbb{B}_2 $: $(z_1,i^kz_2)$ and $(-z_1, i^kz_2)$$(1\leq k\leq
4)$, each corresponding to a unitary operator
 in $\mathcal{V}^*(\Phi ,   \mathbb{B}_2)$. Applying Theorem \ref{thm11} shows  that  $\mathcal{V}^*(\Phi ,   \mathbb{B}_2)$ is
 $*$-isomorphic to $\mathcal{L}(G(\Phi ,   \mathbb{B}_2))  $. Note that
 $ G(\Phi ,   \mathbb{B}_2)$ is abelian but not   cyclic.
\end{exam}
  In  the single-variable case, all known
 abelian deck transformation groups $G(\Phi  )$ are
$*$-isomorphic to   $\mathbb{Z}_n$ or $\mathbb{Z}$, a cyclic group.
For example, the deck transformation group  $G(z^n)$ of $z^n$ over
$\mathbb{D}$ is isomorphic to $\mathbb{Z}_n.$ If $\Phi (z)=\exp(-\frac{1+z}{1-z})$,
then $\Phi $ is a covering map from $\mathbb{D}$ onto $\mathbb{D}\setminus\!\{0\}$,
and $G(\Phi )$ is isomorphic to $\pi_1(\mathbb{D}\setminus\!\{0\}) \cong  \mathbb{Z} $. Example \ref{exam6.4} provides a different example.

The following example   comes from Examples 2 and 3 in \cite{BeD}.

\begin{exam} \cite{BeD} Let $\Omega_1$ be the domain
$$\Omega_1=\{(z_1,z_2)\in \mathbb{C}^2: |z_1|^2+|z_1|^{-2}+|z_2|^2+|z_2|^{-2}<6\}.$$
and $$F(z)=(z_1z_2+z_1^{-1}z_2+z_1z_2^{-1}+z_1^{-1}z_2^{-1}, z_1+z_1^{-1}+z_2+z_2^{-1}).$$
Then $F:\Omega \to F(\Omega)$ is a  holomorphic regular proper map.
 Furthermore,  the deck transformation group of $F$
 is  the dihedral group $G_1$ of order $8$ generated by two members
 $$(z_1,z_2)\mapsto (\frac{1}{z_1},z_2) \quad \mathrm{and} \quad
  (z_1,z_2)\mapsto (z_2,z_1) .$$ Then by Corollary \ref{corproper}
    $\mathcal{V}^*(F ,  \Omega_1) $  is
 $*$-isomorphic to $\mathcal{L}(G_1)  .$ It is clear that $\mathcal{V}^*(F ,  \Omega_1) $
 is not abelian.

 Let $\Omega_2$ be the domain
$$\Omega_2=\{(z_1,z_2)\in \mathbb{C}^2: |z_1|+|z_1|^{-1}
+|z_1^2z_2^{-1}|+|z_1^{-2}z_2|+|z_1z_2^{-1}|+|z_1^{-1}z_2|  <8\}.$$
Then the automorphisms of $\Omega_2$
$$(z_1,z_2)\mapsto  (z_2/z_1,z_2)  \quad \mathrm{and} \quad (z_1,z_2)\mapsto  ( z_1,z_1^3/z_2)  $$
generate a dihedral group $D_6$ of order $12$, which is the deck transformation group of
the holomorphic regular proper map $H=(h_1, h_2):\Omega_2\to H(\Omega_2)$ where
$$h_1(z)=z_1+z_1^{-1}+z_1^{-1}z_2+z_1z_2^{-1}+z_1^2z_2^{-1}++z_1^{-2}z_2,$$
and
$$h_2(z)=z_2+z_2^{-1}+z_1^{3}z_2^{-1}+z_1^{-3}z_2+z_1^{-3}z_2^{2}+ z_1^{3}z_2^{-2} .$$
  Then   $\mathcal{V}^*(H ,  \Omega_2) $  is   non-ableian since
Corollary \ref{corproper} implies that
    $\mathcal{V}^*(H ,  \Omega_2) $  is
    $*$-isomorphic to $\mathcal{L}(D_6)  .$
 \end{exam}
\vskip2mm
Inspired by Proposition \ref{proper1}, we  consider the permutation
groups $S_d$ ($d\geq 2$). Let
$$\phi _1=\sum_{1\leq j\leq d}z_j,$$
$$\phi _2=\sum_{1\leq i<j\leq d }z_iz_j,$$
$$\cdots $$
and $\phi _d=z_1z_2\cdots z_d$ \cite[Chapter 8]{GH4}. Also put  $\Phi=(\phi_1,\cdots,\phi_d)$.
\begin{prop}\label{proper2}As defined above, $\Phi $ is an open map on $\mathbb{C}^d$.  Furthermore,
 $\Phi $ is a holomorphic proper map on $\Omega$
  if $\Omega$  is invariant under the action of  $S_d$.   \end{prop}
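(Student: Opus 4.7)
The plan is to mimic the proof of Proposition \ref{proper1}, with the key algebraic input being that $\phi_1,\ldots,\phi_d$ are, up to signs, the coefficients of the monic polynomial
$$p_z(x) \,=\, \prod_{j=1}^d (x - z_j) \,=\, x^d - \phi_1(z) x^{d-1} + \phi_2(z) x^{d-2} - \cdots + (-1)^d \phi_d(z).$$
Consequently $\Phi(w) = \Phi(z)$ if and only if $p_w = p_z$, which happens if and only if $(w_1,\ldots,w_d)$ is a permutation of $(z_1,\ldots,z_d)$. This identifies the fibres of $\Phi$ with $S_d$-orbits in $\mathbb{C}^d$.

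First I would prove openness of $\Phi$ on $\mathbb{C}^d$ by a Rouch\'e-type argument. Fix $w \in \mathbb{C}^d$ and any neighborhood $U$ of $w$. Write the coordinates of $w$ as distinct complex numbers $\zeta_1,\ldots,\zeta_m$ with multiplicities $k_1,\ldots,k_m$ (so $\sum k_i = d$), and choose pairwise disjoint open disks $D_i$ centered at $\zeta_i$ so that each tuple obtained by picking $k_i$ coordinates from $D_i$ (in some order) lies in $U$. By construction $p_w$ has exactly $k_i$ zeros in $D_i$, counted with multiplicity, so applying Rouch\'e's theorem on each $\partial D_i$ yields some $\varepsilon>0$ such that for every $s' = (s_1',\ldots,s_d')$ with $|s' - \Phi(w)| < \varepsilon$, the polynomial $x^d - s_1' x^{d-1} + \cdots + (-1)^d s_d'$ has exactly $k_i$ roots in each $D_i$. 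These roots, in any order, give a preimage of $s'$ lying inside $U$ (up to a permutation). Thus $\Phi(U)$ contains the open ball of radius $\varepsilon$ around $\Phi(w)$.

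Next I would deduce properness on any $S_d$-invariant domain $\Omega$. Since $\Phi$ is holomorphic on $\overline{\Omega}$ and $\Phi(\Omega)$ is open by the previous step, the criterion recalled in Section \ref{sect2} shows it suffices to check $\Phi(\partial \Omega) \cap \Phi(\Omega) = \emptyset$. Suppose $\lambda, \mu \in \overline{\Omega}$ satisfy $\Phi(\lambda) = \Phi(\mu)$; by the algebraic observation, there exists $\sigma \in S_d$ with $\mu = \sigma(\lambda)$. Because $S_d$ acts on $\mathbb{C}^d$ by biholomorphisms and leaves both $\Omega$ and $\partial \Omega$ invariant, $\lambda \in \Omega$ forces $\mu \in \Omega$, and $\lambda \in \partial \Omega$ forces $\mu \in \partial \Omega$. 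Hence $\Phi(\partial \Omega) \cap \Phi(\Omega) = \emptyset$, completing the proof.

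The main subtlety is handling the Rouch\'e argument when coordinates of $w$ coincide; this requires grouping equal coordinates and working with the correct multiplicities in each disk $D_i$ rather than naively using $d$ distinct disks. Everything else is a routine generalization of the two-variable argument in Proposition \ref{proper1}.
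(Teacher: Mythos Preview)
Your argument is correct, and the properness half is exactly what the paper does. The one genuine difference is in the openness step: you carry out a direct Rouch\'e argument in the spirit of the two-variable proof of Proposition~\ref{proper1}, while the paper bypasses this by invoking Theorem~\ref{prop} (Rudin's result that a holomorphic map $\mathbb{C}^d\to\mathbb{C}^d$ with compact point-preimages is open). Since the algebraic observation $\Phi(\lambda)=\Phi(\mu)\iff \mu\in S_d\cdot\lambda$ already shows each fibre is finite, the paper gets openness in one line. Your route is more self-contained and illustrates the mechanism explicitly, at the cost of the extra bookkeeping with multiplicities that you rightly flag; the paper's route is shorter but leans on a cited black box.
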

\begin{proof}%
First we will prove that $\Phi $ is an open map on $\mathbb{C}^d.$ To
see this, we first show that for each $\lambda$ in  $\mathbb{C}^d$,
$\Phi ^{-1}(\Phi (\lambda))$ is a finite set. This is equivalent that   $\Phi (\lambda)=\Phi (\mu)$ if and only if there is a permutation
$\rho$ on $\mathbb{C}^d$ such  that $\mu=\rho(\lambda).$
 In fact, let $\Phi (\lambda)=w$ and $w=(w_1,\cdots,w_d)$. Consider the following algebraic equation in $x$:
 $$x^d-w_1x^{d-1}+\cdots+(-1)^{d-1}w_{d-1}x+(-1)^{d}w_d=0.$$
 Let $Q$ denote the  polynomial of $x$ defined by the left hand side of this equation.
Since $\Phi (\lambda)=w$, $\lambda$ are   $d$ zeros of  $Q$,
counting multiplicity. Since $$\Phi (\lambda)=\Phi (\mu), $$
  $\mu$ are  also
$d$ zeros of $Q$. Thus up to a permutation $\mu$ equals
$\lambda $, as desired. So applying Theorem \ref{prop} shows that
$\Phi $ is an open map on $\mathbb{C}^d.$

Using the same argument as in the proof of  Proposition   \ref{proper1},
 one can show that $\Phi $ is a holomorphic proper map on $\Omega $   if $\Omega$  is invariant under the action of  $S_d$.
 \end{proof}
Let $\Phi $  be defined as above Proposition \ref{proper2}. The deck
transformation group $G(\Phi )$ of $\Phi $ is
 isomorphic to $ S_d$.   Since $\Omega$ is invariant under
 $S_d$,  by Corollary \ref{corproper} $\mathcal{V}^*(\Phi,\Omega )$ is $*$-isomorphic
to $\mathcal{L}(S_d)$. Therefore, $\mathcal{V}^*(\Phi,\Omega )$ is abelian
if and only if $S_d$ is abelian, if and only if $d =1,2.$ The
  case of  $\Omega=\mathbb{B}_d$ or $\mathbb{D}^d,$ this was
obtained by
 \cite[Proposition 8.4.6]{GH4}.

 \textbf{Acknowledgements.}
This work is  partially supported by NSFC(12071134; 12271090).
%The work was initialized when the first author visited Vanderbilt University.
% The first author thanks    the Department of Mathematics,  Vanderbilt University
%and
%    for  hospitality during his visit and

%%Hansong Huang,

\end{document}